\documentclass[a4paper,10pt,reqno]{amsart}

\usepackage{quiver}
\usepackage[utf8]{inputenc}
\usepackage[T1]{fontenc}
\usepackage{amsthm}
\usepackage{amsmath}
\usepackage{amssymb}
\usepackage[inline]{enumitem}
\usepackage{bbold}
\usepackage{hyperref}
\usepackage{fancyhdr}
\usepackage{mathrsfs}
\usepackage{stmaryrd}
\usepackage{tikz-cd}
\usetikzlibrary{arrows}
\usepackage{scalerel}
\usepackage{array}
\usepackage[all,cmtip]{xy}
\usepackage{lipsum}

\theoremstyle{definition}
\newtheorem{theorem}{Theorem}[section]
\newtheorem{proposition}[theorem]{Proposition}
\newtheorem{lemma}[theorem]{Lemma}

\newtheorem{definition}[theorem]{Definition}
\theoremstyle{remark}
\newtheorem{remark}[theorem]{Remark}
\newtheorem{example}[theorem]{Example}

\newcommand{\AtoMon}{\mathsf{AtoMon}}
\newcommand{\RedAtoMon}{\mathsf{RedAtoMon}}
\newcommand{\Mon}{\mathsf{Mon}}
\newcommand{\Grp}{\mathsf{Grp}}
\newcommand{\Set}{\mathsf{Set}}
\newcommand{\Quiv}{\mathsf{Quiv}}
\newcommand{\A}{\mathscr A}
\newcommand{\At}{\mathsf A}
\newcommand{\F}{\mathsf F}

\newcommand{\id}{\operatorname{id}}
\newcommand{\Hom}{\operatorname{hom}}

\newcommand{\Cal}[1]{\mathcal{#1}}

\title[Categorical Algebra of Atomic Monoids]
{Categorical Algebra of Atomic Monoids:\\ Presentability, Regularity, and Pretorsion Theories}

\author{Federico Campanini}
\address{(F.~Campanini) Institut de Recherche en Math\'ematique et Physique, Universit\'e Catholique de Louvain | Chemin du cyclotron 2, 1348 Louvain-la-Neuve, Belgium}
\email{federico.campanini@uclouvain.be}

\author{Laura Cossu}
\address{(L.~Cossu) Dipartimento di Matematica e Informatica, Università degli Studi di Cagliari | Palazzo delle Scienze, Via Ospedale 72, 09124 Cagliari (Italy)}
\email{laura.cossu3@unica.it}

\subjclass[2020]{18A05, 18A32, 18A40, 18C35, 18E08, 18E40}
\keywords{Atomic monoid, locally presentable category, regular category, adjunction, (pre)torsion theory}
\thanks{This manuscript was written during a research visit by L.~Cossu to the Université catholique de Louvain in July 2026. The visit was supported by the \textit{Programma Mobilità Giovani Ricercatori (MGR)} of the University of Cagliari, funded by the Regione Autonoma della Sardegna under L.R. No.~7 of August 7, 2007. L.~Cossu gratefully acknowledges this support. She also acknowledges the support of the INdAM-GNSAGA research group.
The authors are grateful to Fosco Loregian and Salvatore Tringali for preliminary conversations that inspired the development of this paper.}

\begin{document}

\begin{abstract}
We study the category $\AtoMon$ of atomic monoids and atom-preserving homomorphisms. We prove that $\AtoMon$ is locally finitely presentable by exhibiting a strong generator consisting of compact objects. We show that $\AtoMon$ admits (regular epi, mono)-factorizations but that it is not a regular category: we construct a regular epimorphism which is not pullback-stable. We also establish adjunctions for the group of units and explicitly construct the ``cofree atomic monoid'' over an arbitrary monoid. Finally, we exhibit a way to lift torsion theories of $\Grp$ to pretorsion theories of $\AtoMon$ and extend this construction to a more general setting.
\end{abstract}

\maketitle

\section{Introduction}

Let $H$ be a monoid. A non-unit $a\in H$ is an \emph{atom} if it cannot be written as a product of two non-units, and $H$ is \emph{atomic} if every non-unit is a finite product of atoms. Factorization theory studies factorizations into atoms and the arithmetic invariants that measure their possible non-uniqueness. We refer to the monograph~\cite{GeHK06} for deep insight into this subject.

The category $\AtoMon$ was introduced in \cite{CCT} to bring categorical methods into this setting. Its objects are atomic monoids, and its morphisms are atom-preserving monoid homomorphisms. One of the main motivations for introducing $\AtoMon$ was to study the arithmetic behavior of universal categorical constructions. In \cite{CCT}, products and coproducts were constructed explicitly \cite[Theorems~4.6 and~5.4]{CCT}, and their sets of factorization lengths, together with related invariants, were computed \cite[Theorems~4.10 and~5.7]{CCT}. These constructions provide flexible ways to produce new atomic monoids while controlling their arithmetic, and thus offer useful tools for addressing realization problems in factorization theory.

More generally, all small limits and colimits in $\AtoMon$ were constructed in \cite{CCT}. It is worth noting that colimits in $\AtoMon$ are computed as in the category $\Mon$ of monoids, whereas limits generally are not~\cite[Theorems~6.3 and~6.6]{CCT}. It turns out that $\AtoMon$ is a complete and cocomplete non-full subcategory of $\Mon$, with a categorical structure that reflects the distinction between units, atoms, and reducible non-units. These results show that $\AtoMon$ is not only a natural setting for factorization-theoretic questions, but also a category with a rich structure of its own. It is therefore natural to ask which other standard properties of categorical algebra it satisfies. The present paper addresses this question.

We first prove in Section~\ref{sec:lfp} that $\AtoMon$ is locally finitely presentable (Theorem~\ref{thm:lfp}) by exhibiting a strong generator consisting of compact objects. We also show that the compact objects of $\AtoMon$ are precisely the atomic monoids that are finitely presented as ordinary monoids (Theorem~\ref{thm:compact-characterization}).

In Section~\ref{sec:regularity}, we investigate the regularity of $\AtoMon$. In Subsection~\ref{subsec:regepimonofact}, we show that every morphism in $\AtoMon$ admits a canonical (regular epi, mono)-factorization, and we characterize regular epimorphisms in terms of their kernel in $\Mon$. However, regular epimorphisms are not stable under pullback: in Subsection~\ref{subsec:regepi_notstable}, we explicitly construct a regular epimorphism whose pullback is not regular. Consequently, $\AtoMon$ is not a regular category and, in particular, is not equivalent to a variety of universal algebras. We conclude this part by considering the regular completion of $\AtoMon$ in the sense of Carboni and Vitale \cite{CarboniVitale}; see Subsection~\ref{subsec:regular-completion}.

In Section~\ref{sec:adjunctions}, we study some natural functors associated with atomic monoids. We prove in Subsection~\ref{subsec:grpunits} that the ``group-of-units functor'' $(-)^\times\colon \AtoMon\to\Grp$ has both a left and a right adjoint. Moreover, we construct in Subsection~\ref{subsec:atomization} the ``atomization functor'' $\At\colon \Mon \to \AtoMon$, and prove that it is the right adjoint to the inclusion functor $U\colon \AtoMon\to\Mon$.

Finally, in Section~\ref{sec:pretorsion}, we exhibit a way to lift torsion theories of $\Grp$ to pretorsion theories of $\AtoMon$. In particular, we show that the pair $(\Grp, \RedAtoMon)$ is a pretorsion theory in $\AtoMon$, where $\RedAtoMon$ is the full subcategory of $\AtoMon$ consisting of atomic monoids with trivial group of units (see Subsection~\ref{subsec:pretorsion_atomon}). From the above construction we deduce in Subsection~\ref{subsec:pretorsion_general} a general criterion to lift pretorsion theories along a coreflection (Theorem~\ref{thm:lifting-torsion-theory}).

\subsection{Notation and preliminaries}\label{subsec:atomic-preliminaries} 
We denote by $\mathbb Z, \mathbb N^+$ and $\mathbb N$ the three sets of integers, positive integers, and non-negative integers, respectively. Throughout the paper, monoids are written multiplicatively, have an identity element, and are not assumed to be commutative. For a monoid $H$, we denote its identity by $1_H$ and its group of units by $H^\times$. If $H^\times=\{1_H\}$, then $H$ is said to be {\em reduced}. A non-unit $a\in H$ is an \emph{atom} if every equality $a=xy$, with $x,y\in H$, implies that $x\in H^\times$ or $y\in H^\times$. The set of atoms of $H$ is denoted by $\A(H)$.
The monoid $H$ is \emph{atomic} if every non-unit can be written as a finite non-empty product of atoms. Every atomic monoid is {\em Dedekind-finite}, that is, a product of two elements is a unit if and only if each factor is a unit \cite[Proposition 2.30]{FT}. We will freely use this fact without further mention.

A homomorphism $f\colon H\to K$ between atomic monoids is \emph{atom-preserving} if
$f\bigl(\A(H)\bigr)\subseteq\A(K)$.
The category $\AtoMon$ has atomic monoids as objects and atom-preserving monoid homomorphisms as morphisms. We recall from \cite[Remark 3.1(ii)]{CCT} that for
$f\colon H\to K$ a morphism in $\AtoMon$:
$$
    x\in H^\times
    \Longleftrightarrow
    f(x)\in K^\times
 \qquad \text{ and }\qquad
    x\in\A(H)
    \Longleftrightarrow
    f(x)\in\A(K).
$$
Consequently, $f$ also preserves and reflects the class $H\setminus\bigl(H^\times\cup\A(H)\bigr)$
of non-units which are not atoms.

A useful consequence is the following composition property.

\begin{lemma}\label{lem:cancel-atom-preservation}
Let $f\colon M \to N$ and $g\colon L \to M$ be monoid homomorphisms between atomic monoids. If $f$ and $fg$ are morphisms in $\AtoMon$, then $g$ is a morphism in $\AtoMon$. In particular, a bijective morphism in $\AtoMon$ is an isomorphism in $\AtoMon$.
\end{lemma}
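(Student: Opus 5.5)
The plan is to use the reflection property recalled from \cite[Remark 3.1(ii)]{CCT}: a morphism $f\colon M\to N$ in $\AtoMon$ satisfies $y\in\A(M)$ if and only if $f(y)\in\A(N)$. For the first claim, take an atom $a\in\A(L)$. Since $fg$ is a morphism in $\AtoMon$, we get $f(g(a))=(fg)(a)\in\A(N)$. Apply the reflection property of $f$ to the element $y=g(a)\in M$; it gives $g(a)\in\A(M)$. Hence $g(\A(L))\subseteq\A(M)$. Since $g$ is already a monoid homomorphism between atomic monoids, it is a morphism in $\AtoMon$. We only need that $f$ reflects atoms; no property of $g$ beyond being a homomorphism is used.

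For the second claim, let $f\colon M\to N$ be a bijective morphism in $\AtoMon$. Its set-theoretic inverse $f^{-1}\colon N\to M$ is a monoid homomorphism, since the inverse of a bijective homomorphism preserves products and identity. Apply the first part with $g=f^{-1}$ and $L=N$. Then $fg=\id_N$ is trivially a morphism in $\AtoMon$, so $f^{-1}$ is a morphism in $\AtoMon$ and is inverse to $f$ there.

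There is no real obstacle, since everything reduces to the recalled equivalence $x\in\A(H)\Leftrightarrow f(x)\in\A(K)$. If one wanted to avoid citing it, its proof would be the only nontrivial step. Suppose $f(y)$ is an atom of $N$. Then $f(y)$ is a non-unit, so $y$ is a non-unit, because homomorphisms send units to units. Since $M$ is atomic, write $y=a_1\cdots a_n$ with atoms $a_i$. Then $f(y)=f(a_1)\cdots f(a_n)$ is a product of $n$ atoms of $N$. If $n\ge 2$, the atom $f(y)$ would split as $f(a_1)\cdot f(a_2\cdots a_n)$. Here $f(a_1)$ is a non-unit, and $f(a_2\cdots a_n)=f(a_2)\cdots f(a_n)$ is a non-unit because $N$ is Dedekind-finite and each $f(a_i)$ is a non-unit. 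This contradicts $f(y)$ being an atom, so $n=1$ and $y$ is an atom.
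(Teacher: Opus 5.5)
Your proof is correct and follows the paper's argument. The first claim comes from applying the atom-reflection property of $f$ to $g(a)$, and the second from taking $g=f^{-1}$, an application the paper leaves implicit. Your verification of the reflection property, using atomicity of $M$ and Dedekind-finiteness of $N$, is also sound; the paper does not prove it but simply recalls it as a known fact.
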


\begin{proof}
For every $a\in\A(L)$, the element $fg(a)$ is an atom of $N$. As $f$ is a morphism in $\AtoMon$, it reflects atoms, so $g(a)$ is an atom of $M$. The second assertion follows from the fact that if $f$ is a bijective monoid homomorphism, then also its set-theoretic inverse is a monoid homomorphism.
\end{proof}

We will also use the characterization of monomorphisms proved in
\cite[Remark~3.1(iv)]{CCT}: a morphism $f\colon H\to K$ is a monomorphism in $\AtoMon$ if and only if its restriction to $H^\times\cup\A(H)$ is injective. In particular, a monomorphism in $\AtoMon$ need not be injective on the whole underlying monoid.

We refer to \cite[Sections~4-6]{CCT} for the details of the constructions of limits and colimits in $\AtoMon$.

\medskip

Finally, for a set $X$, we denote by $\F(X)$ the {\em free monoid on $X$}. Its elements are finite words in the alphabet $X$, its identity is the empty word, and its atoms are precisely the words of length one, which we identify with the elements of $X$. For a monoid $H$, we write $H=\langle X\mid \Cal R\rangle$ if $H$ admits a presentation in terms of a set $X$ of generators and a set $\Cal R$ of relations on $\F(X)$. To ease the notation, we will write elements of $H=\langle X\mid R\rangle$ as words in $\F(X)$, implicitly regarding them as representatives of equivalence classes in the quotient of $\F(X)$ by the smallest monoid congruence containing the relations in $\Cal R$.
A monoid is \emph{finitely presented} if it admits a presentation where both $X$ and $R$ are finite. Moreover, we denote by $\langle X\mid\mathcal R\rangle_{\mathrm{ab}}$ the commutative monoid with set of generators $X$ and defining
relations $\mathcal R$.

\section{Locally finite presentability}\label{sec:lfp}
Recall that the category $\Mon$ is locally finitely presentable~\cite[Example~1.2(5)]{AR}. It is natural to ask whether $\AtoMon$ satisfies the same property. Local finite presentability is not automatically inherited by subcategories, so the question requires a direct argument. This is the purpose of this section.

In what follows, we use the terminology \emph{compact object} for a finitely presentable
object in the categorical sense, reserving \emph{finitely presented
monoid} for a monoid admitting a presentation with finitely many
generators and finitely many relations. We start by recalling the key definitions.

\begin{definition}\cite[Chapter~1]{AR}\label{def:locfinpres}
Let $\mathcal C$ be a locally small category with filtered colimits.  An
object $X\in\mathcal C$ is \emph{compact} if the functor
$$
  \Hom_{\mathcal C}(X,-):\mathcal C\to\Set
$$
preserves filtered colimits. The category $\mathcal{C}$ is said to be {\em locally finitely presentable} if it is cocomplete and there exists a set $\mathcal{K}$ of compact objects such that every object of $\mathcal{C}$ is a filtered colimit of objects from $\mathcal{K}$.
\end{definition}

\begin{lemma}\label{lem:fp-monoid-compact}
A finitely presented atomic monoid is compact in $\AtoMon$.
\end{lemma}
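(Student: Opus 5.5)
The plan is to reduce compactness in $\AtoMon$ to compactness in $\Mon$, using the fact recalled above that colimits in $\AtoMon$ are computed as in $\Mon$ \cite[Theorem~6.6]{CCT}. Let $H=\langle X\mid\Cal R\rangle$ with $X,\Cal R$ finite, and let $D\colon I\to\AtoMon$ be a filtered diagram with colimit $L$ and coprojections $\lambda_i\colon D_i\to L$. Since $L$ with the maps $\lambda_i$ is also the colimit of $D$ in $\Mon$, and $H$ is compact in $\Mon$, the canonical map
$$\varinjlim_i \Mon(H,D_i)\longrightarrow \Mon(H,L)$$
is a bijection. The sets $\AtoMon(H,D_i)$ and $\AtoMon(H,L)$ are subsets of the sets $\Mon(H,D_i)$ and $\Mon(H,L)$. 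We must show that the induced map $\varinjlim_i\AtoMon(H,D_i)\to\AtoMon(H,L)$ is a bijection.

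\emph{Injectivity.} Suppose $f,g\colon H\to D_i$ are atom-preserving and $\lambda_i f=\lambda_i g$. Compactness in $\Mon$ gives a transition $u\colon D_i\to D_j$ with $uf=ug$. This equality already identifies the classes of $f$ and $g$ in the colimit of the subsets. Filtered colimits of subsets are computed inside the ambient colimit, so no further work is needed here.

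\emph{Surjectivity.} This is the main step. Let $h\colon H\to L$ be atom-preserving. By compactness in $\Mon$, $h=\lambda_i f$ for some monoid homomorphism $f\colon H\to D_i$, and we must arrange that $f$, possibly after composing with a transition map, is atom-preserving. The transitions and the $\lambda_i$ are morphisms of $\AtoMon$, so they reflect atoms by \cite[Remark~3.1(ii)]{CCT}. Hence for each $a\in\A(H)$ we have
$$\lambda_i f(a)=h(a)\in\A(L),$$
and reflection gives $f(a)\in\A(D_i)$. So $f$ is atom-preserving automatically. Lemma~\ref{lem:cancel-atom-preservation} packages the same argument, with $\lambda_i$ playing the role of the first map. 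Thus the full argument rests on $H$ being compact in $\Mon$, on colimits being created by the inclusion, and on morphisms reflecting atoms.

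The main point needing care is that filtered colimits in $\AtoMon$ really are the $\Mon$-colimits, with coprojections lying in $\AtoMon$. I would cite \cite[Theorem~6.6]{CCT} for this rather than reprove it. If the cited result only covered colimits abstractly, I would verify it directly for filtered diagrams. In that case the colimit in $\Mon$ is the usual quotient of the disjoint union, and one checks that it is atomic and that the coprojections preserve atoms. Both follow because units, atoms and non-atoms are preserved and reflected along each transition map.
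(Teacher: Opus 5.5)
Your proof is correct and is essentially the paper's argument. Colimits in $\AtoMon$ are the $\Mon$-colimits with atom-preserving coprojections, compactness of $H$ in $\Mon$ supplies the factorization and its essential uniqueness, and the factor is atom-preserving because the coprojection reflects atoms (Lemma~\ref{lem:cancel-atom-preservation}); the only slip is that the paper cites Theorem~6.3 of \cite{CCT} for the colimit fact, not Theorem~6.6.
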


\begin{proof}
Let $H$ be finitely presented as a monoid and let
$D:I\to\AtoMon$ be a filtered diagram.  Colimits in $\AtoMon$ are
computed as in $\Mon$ by \cite[Theorem~6.3]{CCT}.  Since $H$ is compact
in $\Mon$ (see \cite[Corollary~3.13]{AR}), every monoid homomorphism
$$
  f:H\to\mathop{\mathrm{colim}}_{i\in I}D_i
$$
factors through some colimit morphism $d_i:D_i\to\mathrm{colim}D$.
If $f$ is a morphism in $\AtoMon$, then the resulting factor
$g:H\to D_i$ is atom-preserving by Lemma~\ref{lem:cancel-atom-preservation}.  The uniqueness follows from compactness in $\Mon$.  Thus $H$ is compact in $\AtoMon$.
\end{proof}

Recall that a set $\mathcal G$ of objects of a category $\mathcal C$ is called a {\em generator (for $\mathcal C$)} if for each pair $f_1, f_2\colon K\to K'$ of distinct morphisms there exists an object $G \in \mathcal G$ and a morphism $g \colon G\to K$ with $f_1g\neq f_2 g$.
A generator $\mathcal G$ is called {\em strong} provided that for each object $K$ and each proper subobject of $K$ there exists a morphism $G\to K$ with $G \in \mathcal G$ which does not factor through that subobject (see, for instance, \cite{AR}). 

According to \cite[Theorem~1.11]{AR}, a category $\mathcal C$ is locally finitely presentable if and only if it is cocomplete and has a strong generator consisting of compact objects. We use this characterization to prove that $\AtoMon$ is locally finitely presentable.

\medskip

For every finite set $X$, let $\F(X)$ be the free monoid generated by $X$. If
$\omega_1,\omega_2\in\F(X)$ are distinct words of length at least two, set
$$
  M(X;\omega_1,\omega_2):=\langle X\mid \omega_1=\omega_2\rangle.
$$

\begin{lemma}\label{lem:one-relator-atomic}
For every finite set $X=\{x_1,\dots, x_n\}$ and every pair of distinct words
$\omega_1,\omega_2\in\F(X)$ with $|\omega_1|,|\omega_2|\geq 2$, the monoid $M(X;\omega_1,\omega_2)$ is reduced
and atomic, and
$$
  \A(M(X;\omega_1,\omega_2))=\{ x_1,\ldots,x_n\}.
$$
In particular, $M(X;\omega_1,\omega_2)$ is compact in $\AtoMon$.
\end{lemma}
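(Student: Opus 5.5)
The plan is to use a length function. Since both sides of the single defining relation $\omega_1=\omega_2$ have length at least two, the relation need not preserve length. It would do so if $|\omega_1|=|\omega_2|$, but that cannot be assumed. So instead of an exact length I will use the fact that every word equivalent to a given one has length at least $\min$ of something. The cleanest invariant is this: the congruence generated by $\omega_1=\omega_2$ is the reflexive--transitive closure of elementary moves $u\omega_1v\leftrightarrow u\omega_2v$. Each such move replaces a subword of length $\ge 2$ by another subword of length $\ge 2$. Consequently, the class of the empty word contains only the empty word, since no move applies to a word of length $<2$. Likewise, the class of a single letter $x_i$ is just $\{x_i\}$, because a word of length $1$ contains no subword of length $\ge 2$ and so admits no move.

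From this I will deduce the claims one at a time. First, reducedness: if $uv=1$ in $M$, then the word $uv$ is congruent to the empty word, hence equal to it, so $u=v=1$. Second, every generator $x_i$ is a non-unit, and it is an atom: if $x_i=uv$, then the word $uv$ lies in the class of $x_i$, which is $\{x_i\}$. Hence one of $u,v$ is empty, i.e. equals $1$, which is a unit. Third, atomicity: every non-identity element is represented by a nonempty word, which is a product of letters, hence of atoms. Fourth, no other element is an atom: any element not equal to $1$ or to some $x_i$ is represented by a word of length $\ge 2$, so it splits as $x_j\cdot w'$ with both factors non-units (the second is nonempty, hence not $1$, and $M$ is reduced). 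The letters are also pairwise distinct in $M$, since their classes are singletons. This gives $\A(M)=\{x_1,\dots,x_n\}$.

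Finally, $M$ is finitely presented by construction, so it is compact in $\AtoMon$ by Lemma~\ref{lem:fp-monoid-compact}.

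The only step needing care is the description of the congruence as generated by elementary two-sided substitutions, together with the observation that words of length $\le1$ admit no moves. This is standard for monoid presentations. I expect it to be the main point to state explicitly, though it poses no real obstacle.
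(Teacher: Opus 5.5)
Your proof is correct and follows essentially the same route as the paper. Both rest on the observation that each elementary move $u\omega_1v\leftrightarrow u\omega_2v$ replaces a subword of length at least two by another of length at least two, so the classes of the empty word and of each single letter are singletons; reducedness, the equality $\A(M)=\{x_1,\dots,x_n\}$ and atomicity follow from this, and compactness comes from Lemma~\ref{lem:fp-monoid-compact}, with your write-up simply spelling out the steps the paper states briefly.
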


\begin{proof}
Every elementary application of the relation $\omega_1=\omega_2$ replaces a subword of length at least two by a word of length at least two.
Consequently, no non-empty word is congruent to the empty word, and no word of length one is congruent to a different word.  Thus the quotient is reduced and the elements $x_1,\ldots,x_n$ are
pairwise distinct atoms.  Every other non-identity element has a
representative of length at least two and is therefore a product of two non-units.  Hence, these are all the atoms and the monoid is atomic. The last assertion follows from Lemma~\ref{lem:fp-monoid-compact}.
\end{proof}

Let $F:=\F(\{x\})$ be the free monoid on one generator, and let
$G:=\langle g,g^{-1}\mid gg^{-1}=g^{-1}g=1\rangle$ be the infinite
cyclic group, regarded as an object of $\AtoMon$. For every positive integer $n$, fix a finite set $X_n:=\{x_1,\dots, x_n\}$. Consider the set
\begin{equation}\label{eq:G}
\mathcal G:=\{F,G\}\cup
\left\{
  M(X_n;\omega_1,\omega_2) :    n \in \mathbb N^+,\ \omega_1,\omega_2\in\F(X_n),\ \omega_1\neq\omega_2,
    |\omega_1|\geq 2,\ |\omega_2|\geq 2
\right\}.
\end{equation}

\begin{proposition}\label{prop:strong-generators}
The set $\mathcal G$ is a strong generator for $\AtoMon$.
\end{proposition}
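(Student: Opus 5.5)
The plan is to treat the generator property and the strong generator property separately. Both rest on one observation: in an atomic monoid every element is a unit or a finite product of atoms. Moreover, since atomic monoids are Dedekind-finite, a unit times an atom is again an atom. Hence every non-unit can be written as a product $a_1\cdots a_k$ of atoms with $k\geq 1$, with no units interspersed.

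\emph{Generator.} Let $f_1\neq f_2\colon K\to K'$ be morphisms in $\AtoMon$. Since $K$ is generated as a monoid by $K^\times\cup\A(K)$, the two maps differ on some unit $u$ or on some atom $a$.
\begin{itemize}
\item In the first case, use the homomorphism $G\to K$ with $g\mapsto u$. It is atom-preserving because $G$ has no atoms.
\item In the second case, use $F\to K$ with $x\mapsto a$. It is atom-preserving because $\A(F)=\{x\}$.
\end{itemize}
In either case the composites with $f_1$ and $f_2$ differ.

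\emph{Strong generator.} Let $m\colon S\to K$ be a monomorphism such that every morphism from an object of $\mathcal G$ into $K$ factors through $m$. I will show that $m$ is an isomorphism, which proves the claim by contraposition.

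First, $m$ is surjective on units. For $u\in K^\times$, the map $G\to K$ with $g\mapsto u$ factors as $m\circ h$, and $h(g)$ is a unit of $S$ mapping to $u$.

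Second, $m$ is surjective on atoms. For $a\in\A(K)$, the map $F\to K$ with $x\mapsto a$ factors through $m$. The preimage so obtained is an atom, since morphisms of $\AtoMon$ reflect atoms. Because $K$ is generated by units and atoms, $m$ is surjective.

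Third, $m$ is injective. Suppose $m(s_1)=m(s_2)$; I split into cases.
\begin{itemize}
\item Since $m$ preserves and reflects units, atoms, and the remaining non-units, $s_1$ and $s_2$ lie in the same one of these three classes.
\item If they are units or atoms, then $s_1=s_2$ by the characterization of monomorphisms in $\AtoMon$ (injectivity on $S^\times\cup\A(S)$).
\item Otherwise $s_1=a_1\cdots a_k$ and $s_2=b_1\cdots b_l$ with atoms $a_i,b_j\in\A(S)$ and $k,l\geq 2$.
\end{itemize}

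In the last case, let $c_1,\dots,c_n$ be the distinct atoms occurring among the $a_i$ and $b_j$. Write $\omega_1,\omega_2\in\F(X_n)$ for the corresponding words, obtained by replacing $c_i$ with $x_i$. If $\omega_1=\omega_2$, then $s_1=s_2$ immediately. Otherwise $M:=M(X_n;\omega_1,\omega_2)\in\mathcal G$, and the following steps apply.
\begin{enumerate}
\item The assignment $x_i\mapsto m(c_i)$ defines a monoid homomorphism $M\to K$, because $m(s_1)=m(s_2)$.
\item This homomorphism is atom-preserving, since $\A(M)=\{x_1,\dots,x_n\}$ by Lemma~\ref{lem:one-relator-atomic}.
\item By hypothesis it factors as $m\circ h$. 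Lemma~\ref{lem:cancel-atom-preservation} ensures that $h$ is a morphism in $\AtoMon$, so each $h(x_i)$ is an atom of $S$ with $m(h(x_i))=m(c_i)$.
\item Injectivity of $m$ on atoms then forces $h(x_i)=c_i$.
\item Hence $s_1=h(\omega_1)=h(\omega_2)=s_2$.
\end{enumerate}

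Thus $m$ is a bijective morphism, hence an isomorphism by Lemma~\ref{lem:cancel-atom-preservation}. The main obstacle is this injectivity step: one must absorb units into atoms so that only words of length at least two appear, and then apply injectivity on atoms to identify the factorization $h$ with the original atoms.
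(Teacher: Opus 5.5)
Your proof is correct and follows essentially the same route as the paper. The morphisms from $F$ and $G$ give surjectivity on atoms and units, and the one-relator monoid $M(X_n;\omega_1,\omega_2)$, together with injectivity of $m$ on atoms, forces $s_1=s_2$. The remaining differences are cosmetic: you argue injectivity directly rather than by contradiction, and you verify the generator property explicitly. The step of absorbing units into atoms is unnecessary, since atomicity already writes every non-unit as a product of atoms, and a non-atom needs at least two of them.
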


\begin{proof} Notice that $\mathcal G$ is clearly a generator for $\AtoMon$ (because so is the set $\{F,G\}$).
Let $m:H\to K$ be a monomorphism in $\AtoMon$, and suppose that every morphism $M\to K$, with $M\in\mathcal G$, factors through~$m$. We prove that $m$ is an isomorphism.

For every $a\in\A(K)$, the morphism $F\to K$ determined by $x\mapsto a$ factors through $m$; hence every atom of $K$ belongs to $m(H)$.  For every $u\in K^\times$, the morphism $G\to K$ determined by $g\mapsto u$
factors through $m$; hence every unit of $K$ belongs to $m(H)$.  Since $K$ is atomic, it follows that $m$ is surjective as a map of sets.

Suppose that $m$ is not injective. Choose $a,b\in H$ with $a\neq b$ and $m(a)=m(b)$. As recalled in Subsection~\ref{subsec:atomic-preliminaries}, a morphism of $\AtoMon$ preserves and reflects the classes of units, atoms, and non-units which are not atoms, while a monomorphism is injective on $H^\times\cup\A(H)$.  Thus $a$ and $b$ are non-units which are not atoms. Choose atomic factorizations
$$
  a=a_1\cdots a_r,
  \qquad
  b=b_1\cdots b_s,
  \qquad r,s\geq 2.
$$
Let $C\subseteq\A(H)$ be the finite set of atoms occurring in these factorizations, choose a set $X=\{x_c:c\in C\}$ of formal symbols, and let
$$
  \omega_1=x_{a_1}\cdots x_{a_r},
  \qquad
  \omega_2=x_{b_1}\cdots x_{b_s}
$$
be two words in $\F(X)$. Observe that $\omega_1$ and $\omega_2$ are distinct, since otherwise $a=b$.  Define the monoid
$M:=\langle X\mid \omega_1=\omega_2\rangle$. By Lemma~\ref{lem:one-relator-atomic}, the generators $x_c$ are precisely the atoms of $M$.  The assignment
$$
  x_c\mapsto m(c)
$$
defines a morphism $\varphi:M\to K$, because $m(a)=m(b)$. Notice that $M$ is isomorphic to a monoid $M' \in \Cal G$, therefore $\varphi=m\psi$ for some morphism $\psi:M\to H$. For every $c\in C$, $m(\psi(x_c))=m(c)$; since both $\psi(x_c)$ and $c$ are atoms, and $m$ is injective on atoms, then $\psi(x_c)=c$.  Applying $\psi$ to the relation $\omega_1=\omega_2$ yields $a=b$, which is a contradiction.  Therefore $m$ is injective.

We then conclude that $m$ is a bijective monoid homomorphism, its inverse preserves atoms because $m$ reflects atoms, and hence $m$ is an isomorphism in $\AtoMon$.
\end{proof}

\begin{remark}
The two-object set $\{F,G\}$ is a generator but it is not strong. Intuitively, it
detects atoms and units, but not relations between products of atoms.
For instance, for the non-injective monomorphism
$$
  m\colon \F(\{\alpha,\beta\})\to
  K:=\langle a,b\mid a^3=b^2\rangle,
  \qquad \alpha\mapsto a,\quad\beta\mapsto b,
$$
every morphism in $\AtoMon$ from either $F$ or $G$ to $K$ factors through $m$.
\end{remark}

\begin{theorem}\label{thm:lfp}
The category $\AtoMon$ is locally finitely presentable.
\end{theorem}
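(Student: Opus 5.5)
The plan is to apply the characterization recalled just before the construction of $\mathcal G$: by \cite[Theorem~1.11]{AR}, a locally small category is locally finitely presentable if and only if it is cocomplete and has a strong generator consisting of compact objects. The argument therefore reduces to three checks.

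First, local smallness and cocompleteness. $\AtoMon$ is locally small because it is a (non-full) subcategory of $\Mon$, so each hom-set is a subset of a set of monoid homomorphisms. For cocompleteness I would invoke \cite[Theorem~6.3]{CCT}, which constructs all small colimits in $\AtoMon$ and shows they are computed as in $\Mon$. In particular, $\AtoMon$ has filtered colimits, so Definition~\ref{def:locfinpres} applies.

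Second, every member of the set $\mathcal G$ in \eqref{eq:G} is compact. The free monoid $F=\F(\{x\})=\langle x\mid\emptyset\rangle$ is finitely presented, and it is atomic with $\A(F)=\{x\}$. The infinite cyclic group $G=\langle g,g^{-1}\mid gg^{-1}=g^{-1}g=1\rangle$ is finitely presented, and it is atomic vacuously because it has no non-units. Both are therefore compact by Lemma~\ref{lem:fp-monoid-compact}. Each monoid $M(X_n;\omega_1,\omega_2)$ is compact by Lemma~\ref{lem:one-relator-atomic}.

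Third, $\mathcal G$ must be a strong generator, which is exactly Proposition~\ref{prop:strong-generators}. Two points need confirming. One is that $\mathcal G$ is a set: for each $n$ there are only countably many pairs of words in $\F(X_n)$, so $\mathcal G$ is a countable set. The other is that the notion of strong generator used in Proposition~\ref{prop:strong-generators} matches the one in \cite{AR}. There, "proper subobject" means a monomorphism $m\colon H\to K$ that is not an isomorphism, and this is precisely what the proposition rules out.

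None of these steps is a real obstacle, since the substantive work is already in Proposition~\ref{prop:strong-generators} and the cited colimit construction. The only point I would double-check is whether \cite[Theorem~1.11]{AR} needs extra hypotheses in our setting, such as existence of the relevant (strong epi, mono)-factorizations or completeness. If it does, those hypotheses are covered by cocompleteness of $\AtoMon$, which is \cite[Theorem~6.3]{CCT}, together with its completeness, proved in \cite{CCT} as recalled in the introduction.
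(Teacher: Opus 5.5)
Your proposal is correct and follows the paper's proof exactly. Like the paper, you get cocompleteness from \cite[Theorem~6.3]{CCT}, compactness of the members of $\mathcal G$ from Lemmas~\ref{lem:fp-monoid-compact} and~\ref{lem:one-relator-atomic}, and the strong-generator property from Proposition~\ref{prop:strong-generators}, then conclude via \cite[Theorem~1.11]{AR}. Your extra checks (local smallness, $\mathcal G$ being a set) are fine, and the criterion needs no further hypotheses, so the appeal to completeness is unnecessary.
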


\begin{proof}
The category $\AtoMon$ is cocomplete by \cite[Theorem~6.3]{CCT}.
Every object of $\mathcal G$ is compact by
Lemma~\ref{lem:fp-monoid-compact} and
Lemma~\ref{lem:one-relator-atomic}, and $\mathcal G$ is a strong generator by Proposition~\ref{prop:strong-generators}.  The claim now follows from the strong generator criterion for locally finitely presentable categories recalled above.
\end{proof}

The following result complements Lemma~\ref{lem:fp-monoid-compact}, showing that the compact objects in $\AtoMon$ are precisely the finitely presented (atomic) monoids.

\begin{theorem}\label{thm:compact-characterization}
An atomic monoid $H$ is compact in $\AtoMon$ if and only if it is finitely presented as a monoid.
\end{theorem}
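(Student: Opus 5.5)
The converse of Lemma~\ref{lem:fp-monoid-compact} should follow from the general theory of locally finitely presentable categories together with the fact that colimits in $\AtoMon$ are computed as in $\Mon$ \cite[Theorem~6.3]{CCT}. The plan is to show that a compact atomic monoid is a retract, in $\AtoMon$, of a finite colimit of objects of $\mathcal G$. Then we transfer compactness to $\Mon$, where compact objects are exactly the finitely presented monoids \cite[Corollary~3.13]{AR}.

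\emph{Step 1.} By Proposition~\ref{prop:strong-generators}, $\mathcal G$ is a strong generator of compact objects in the cocomplete category $\AtoMon$. By the proof of \cite[Theorem~1.11]{AR}, every object $H$ of $\AtoMon$ is then the filtered colimit of the canonical diagram of all finite colimits of objects of $\mathcal G$ mapping to $H$. (Equivalently, the closure of $\mathcal G$ under finite colimits is a dense set of compact objects, and $H$ is the filtered colimit of its canonical diagram over it.)

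\emph{Step 2.} Assume $H$ is compact in $\AtoMon$. Then $\id_H$ factors through one of the colimit injections $C\to H$, where $C$ is a finite colimit of objects of $\mathcal G$. Hence $H$ is a retract of $C$ in $\AtoMon$: there are morphisms $s\colon H\to C$ and $r\colon C\to H$ with $rs=\id_H$.

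\emph{Step 3.} Since colimits in $\AtoMon$ are computed as in $\Mon$, the monoid $C$ is also the finite colimit in $\Mon$ of the same diagram. Each object of $\mathcal G$ is a finitely presented monoid: $F$, $G$ and the one-relator monoids $M(X_n;\omega_1,\omega_2)$ are presented with finitely many generators and relations. Compact objects of $\Mon$ are closed under finite colimits, so $C$ is compact in $\Mon$. The maps $s$ and $r$ are in particular monoid homomorphisms with $rs=\id_H$, so $H$ is a retract of $C$ in $\Mon$. Retracts of compact objects are compact: $\Hom(H,-)$ is a retract of $\Hom(C,-)$, and a retract of a functor preserving filtered colimits again preserves them. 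Hence $H$ is compact in $\Mon$, and therefore finitely presented as a monoid. Together with Lemma~\ref{lem:fp-monoid-compact}, this gives the equivalence.

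\emph{Main obstacle.} The delicate point is Step 1, namely invoking correctly that in a category with a strong generator of compact objects, every object is a filtered colimit of finite colimits of generators. This is exactly the content of the proof of \cite[Theorem~1.11]{AR}, and I would cite it as such; it also yields the characterization of compact objects as retracts of such finite colimits (cf.\ \cite[Remark~1.9 and Proposition~1.16]{AR}). If one prefers a more direct argument, the fallback is as follows. Write $H$ as the filtered colimit in $\AtoMon$ of the finite colimits $C\to H$ of objects of $\mathcal G$ described above; this colimit is also a colimit in $\Mon$. Then apply compactness of $H$ in $\AtoMon$ to $\id_H$ to obtain the retraction directly.
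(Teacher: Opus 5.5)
Your proposal is correct and follows the paper's proof essentially step for step. You write $H$ as a filtered colimit of objects of the finite-colimit closure of $\mathcal G$ via the proof of \cite[Theorem~1.11]{AR}, use compactness of $H$ to make it a retract in $\AtoMon$ (hence in $\Mon$) of one of them, and conclude because colimits in $\AtoMon$ are computed in $\Mon$, where compact objects are closed under finite colimits and retracts.

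One small correction: your ``Equivalently'' is not quite right. One-step finite colimits of objects of $\mathcal G$ need not be closed under finite colimits, so you should work with the iterated closure, as in your parenthetical. Your Step~3 then goes through by induction on how an object of the closure is built.
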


\begin{proof}
The ``if'' direction is exactly Lemma~\ref{lem:fp-monoid-compact}.
For the converse, let $\mathcal G$ be the strong generator defined in \eqref{eq:G}, and let $\mathcal G_{\mathrm{fin}}$ denote its closure under finite colimits in $\AtoMon$.  By the proof of~\cite[Theorem~1.11]{AR}, every object $H$ of $\AtoMon$ can be obtained as a filtered colimit of objects of $\mathcal G_{\mathrm{fin}}$, that is, $H=\mathop{\mathrm{colim}}_{i\in I}D_i$ for a filtered diagram $D\colon I \to \mathcal{G}_\mathrm{fin}$.
Therefore, if $H$ is compact, the identity morphism of $H$ factors through some colimit morphism $D_j \to H$, $j \in I$.  Thus $H$ is a retract of $D_j$ in $\AtoMon$, and hence also in~$\Mon$.

Recall that compact objects in $\Mon$ are precisely the finitely presented monoids~\cite[Example~1.2(5)]{AR}; therefore every object of $\mathcal G$ is compact in $\Mon$. By the definition of $\mathcal G_\mathrm{fin}$, $D_j$ is a finite colimit in $\AtoMon$ (hence in $\Mon$, by~\cite[Theorem~6.3]{CCT}) of compact objects of $\Mon$. Since finite colimits of compact objects are compact~\cite[Proposition~1.3]{AR}, $D_j$ is compact in $\Mon$. Compact objects are closed under retracts~\cite[Remark,~p.13]{AR}, so $H$ is compact in $\Mon$, hence a finitely presented monoid.
\end{proof}

\section{Failure of regularity}\label{sec:regularity}

Let $\Cal C$ be a category. Following \cite{Gran}, a morphism in $\Cal C$ is called a \emph{regular epimorphism} if it is the coequalizer of two parallel arrows. A morphism $f \colon X \to Y$ admits a \emph{(regular epi, mono)-factorization} if there exist a regular epimorphism $p \colon X \to I$ and a monomorphism $m \colon I \to Y$ such that $f=m p$. A category with finite limits is \emph{regular} if (1) every morphism admits a (regular epi, mono)-factorization and (2) the pullback of a regular epimorphism along any morphism is again a regular epimorphism. Examples of regular categories are the category of sets, the category of (abelian) groups, and more generally any variety of universal algebras. In particular, the category $\Mon$ of monoids is regular.

The aim of this section is to address the natural question of whether $\AtoMon$ is also a regular category. We show that morphisms in $\AtoMon$ admit (regular epi, mono)-factorizations. Nevertheless, we construct an explicit example in which the pullback of a regular epimorphism is not regular, showing that $\AtoMon$ is not a regular category. Its regular completion will be discussed in the last subsection.

\medskip

\subsection{Canonical (regular epi, mono)-factorization}\label{subsec:regepimonofact}

Let $f\colon H \to K$ be a morphism in $\AtoMon$. We first want to understand how to identify elements with the same image under $f$. We can of course consider the congruence on $H$ given by $x \sim y$ if and only if $f(x)=f(y)$, which corresponds to the ordinary kernel congruence in $\Mon$
$$
\ker_{\Mon}(f):=\{(x,y) \in H\times H \mid f(x)=f(y)\}.
$$
However, this monoid may fail to be atomic, as we will show in the following example.

\begin{example}\label{ex:non-atomic-monoid-kernel}
Consider the commutative monoids
$$
H:=
\left\langle
a,b,p,q,r,s
\ |\
pq=apq,\; rs=brs
\right\rangle_{\mathrm{ab}}
$$
and
$$
K:=
\left\langle
c,p,q,r,s
\ |\
pq=cpq,\; rs=crs,\; pq=rs
\right\rangle_{\mathrm{ab}}.
$$
Both monoids are reduced and atomic, and their atoms are precisely the generators. Indeed, no defining relation involves the empty word or a word of length one. Define a monoid homomorphism $f:H\to K$ by
$$
f(a)=f(b)=c,\; f(p)=p,\; f(q)=q,\; f(r)=r,\; f(s)=s.
$$
The map is well defined and atom-preserving. Since $pq=rs$ in $K$, the element
$$
\xi:=(pq,rs)
$$
belongs to $\ker_{\Mon}(f)$. Moreover, in $H$ we have $pq=apq$ and $rs=brs$, and hence
$$
\xi=(a,b)\xi.
$$
Since both $(a,b)$ and $\xi$ are non-units of $\ker_{\Mon}(f)$,
the element $\xi$ is not an atom.

We claim that $\xi$ does not admit a factorization into atoms of
$\ker_{\Mon}(f)$. Suppose that
$$
\xi=(u_1,v_1)\cdots(u_n,v_n)
$$
is a factorization into non-units of $\ker_{\Mon}(f)$. Then
$$
u_1\cdots u_n=pq,
\quad
v_1\cdots v_n=rs,
\quad
f(u_i)=f(v_i)
\quad\text{for every }i=1,\ldots,n.
$$
Observe that the defining relations of $H$ do not affect the number
of occurrences of the letters $p,q,r,s$. Thus, $p$ and $q$ occur
exactly once in the product $u_1\cdots u_n$, while $r,s$, and
consequently $b$, do not occur. Analogously, $r$ and $s$ occur
exactly once in the product $v_1\cdots v_n$, while $p,q$, and
consequently $a$, do not occur.

We show that $p$ and $q$ must occur in the same factor $u_i$. Indeed, the difference between the numbers of occurrences of $p$ and $q$ is preserved by all the defining relations of $K$. Since $f(v_i)$ has a representative containing neither $p$ nor $q$, this difference is zero for $f(v_i)$, and hence also for $f(u_i)$. Therefore, if $p$ occurs in $u_i$, then $q$ must occur in the same factor.

Consequently, there exists a unique index $i_0$ such that $u_{i_0}=a^kpq$ for some $k\in\mathbb N$, while $u_i=a^{k_i}$ for some $k_i\in\mathbb N^+$ whenever $i\neq i_0$.

An analogous argument shows that there exists a unique index $j_0$ such that $v_{j_0}=b^hrs$ for some $h\in\mathbb N$, while $v_j=b^{h_j}$ for some $h_j\in\mathbb N^+$ whenever $j\neq j_0$.

We must have $i_0=j_0$. Indeed, if $i_0\neq j_0$, then
$$
f(u_{i_0})=c^kpq
\qquad\text{and}\qquad
f(v_{i_0})=c^{h_{i_0}}.
$$
These two elements are distinct in $K$, since the defining relations
of $K$ preserve the total number of occurrences of the letters
$p,q,r,s$. This contradicts $f(u_{i_0})=f(v_{i_0})$. Therefore, $i_0=j_0$. Since $a^kpq=pq$ and $b^hrs=rs$ in $H$, we obtain
$$
(u_{i_0},v_{i_0})=(pq,rs)=\xi.
$$
Hence every factorization of $\xi$ into non-units contains $\xi$
itself as one of its factors. Since $\xi$ is not an atom, it cannot
be written as a finite product of atoms. Therefore
$\ker_{\Mon}(f)$ is not atomic.
\end{example}

In order to take a congruence internal to $\AtoMon$, let us consider the kernel pair $(\ker(f), r_1, r_2)$ of $f\colon H \to K$. By \cite[Theorem 6.5]{CCT}, $\ker(f)$ is the monoid generated by the set
$$
\Sigma_f:=\{(h_1,h_2) \in H \times H \mid f(h_1)=f(h_2) \text{ and either } h_1, h_2 \in \mathscr A(H) \text{ or } h_1, h_2 \in H^\times\} .
$$

Therefore, $(x,y) \in \ker(f)$ if and only if either $x,y \in H^\times$ and $f(x)=f(y)$, or there exist $n \in \mathbb N^+$ and $a_1, \dots a_n, b_1, \dots, b_n \in \mathscr A(H)$ such that $x=a_1\cdots a_n$, $y=b_1\cdots b_n$ and $f(a_i)=f(b_i)$ for all $i \in \{ 1,\dots,n\}$. Note that $\ker(f)$ is not an ordinary equivalence relation on $H$ in general, as the following example shows.

\begin{example}
    Consider the two atomic monoids
    $$
    H:=\langle a_1,a_2, b_1,b_2,c_1,c_2,c_3,d_1,d_2,d_3 \mid b_1b_2=c_1c_2c_3\rangle
     \quad \text{and} \quad K:=\langle p,q\mid p^2=q^3\rangle.
    $$
    Define a morphism
    $f\colon H \to K$ by
    $$
    f(a_i)=f(b_i)=p \quad \text{ for } i=1,2 \quad  \text{and} \quad f(c_j)=f(d_j)=q \quad \text{for } j=1,2,3.
    $$
    Set $x:=a_1a_2$, $y:=b_1b_2=c_1c_2c_3$ and $z:=d_1d_2d_3$. Then, $(x,y),(y,z) \in \ker(f)$ but $(x,z)\notin\ker(f)$.
\end{example}

Notice that this does not contradict the fact that $\ker(f)$ is an internal equivalence relation in $\AtoMon$. The ``(internal) transitivity morphism'' is defined on the pullback $\ker(f)\times_H\ker(f)$ of $r_1$ and $r_2$ in $\AtoMon$, which is not the monoid of all ``compatible pairs'' $\{((x_1,x_2),(x_3,x_4))\in H^2\times H^2 \mid x_2=x_3\}$.

\medskip

For a morphism $f\colon H\to K$ in $\AtoMon$, let $\theta_f$ be the smallest monoid congruence containing $\Sigma_f$. The coequalizer of $r_1$ and $r_2$ is therefore the projection $p_f\colon H \to H/\theta_f$ (see \cite[Proposition~6.1]{CCT}). We have a factorization of $f$ in $\AtoMon$ of the form
\begin{equation}\label{eq:epimonofact}
    \xymatrix{
    \ker(f) \ar@<0.6ex>[r]^-{r_1} \ar@<-0.6ex>[r]_-{r_2} & H \ar[rr]^f \ar[rd]_{p_f} & & K \\
    & & H/\theta_f \ar[ru]_{m_f} & \; ,
    }
\end{equation}
where $m_f\colon H/\theta_f \to K$ is the map sending, for every $x \in H$, the congruence class $[x]$ to $f(x)$. Notice that the map $m_f$ is a monomorphism in $\AtoMon$ as it is injective on units and atoms of $H/\theta_f$ (but not an injective map in general). Indeed, if $[x]$ and $[y]$ are both atoms or both units and $m_f([x])=m_f([y])$, then $f(x)=f(y)$. Hence $(x,y)\in\Sigma_f\subseteq\theta_f$, and therefore $[x]=[y]$ (cf. the proof of~\cite[Proposition~6.1]{CCT}). This argument proves the following.

\begin{proposition}\label{prop:image-factorization}
Every morphism $f:H\to K$ in $\AtoMon$ admits a (regular epi, mono)-factorization.
\end{proposition}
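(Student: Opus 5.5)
The plan is to use the factorization displayed in \eqref{eq:epimonofact}, $f=m_f\,p_f$, and check its two halves separately.

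\emph{First factor.} Here $p_f\colon H\to H/\theta_f$ is, by construction, the coequalizer in $\AtoMon$ of the kernel pair $r_1,r_2\colon\ker(f)\to H$; this is \cite[Proposition~6.1]{CCT} together with the description of $\ker(f)$ as the submonoid generated by $\Sigma_f$ from \cite[Theorem~6.5]{CCT}. So $p_f$ is a regular epimorphism. Implicit in this is that $H/\theta_f$ is atomic and that $p_f$ preserves atoms. I would recall this from \cite{CCT}, and would also check it directly. Since $\theta_f$ is generated by pairs of atoms or of units, each generating pair consists of two elements of the same type. One then shows that classes of atoms of $H$ are atoms of the quotient and classes of units are units. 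Atomicity of $H/\theta_f$ follows from that of $H$, because $p_f$ is surjective.

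\emph{Factorization.} Next I would check that $m_f$ is well defined, that $m_f p_f = f$, and that $m_f$ is a morphism of $\AtoMon$. For well-definedness it suffices that $\theta_f\subseteq\ker_{\Mon}(f)$. This holds because $\ker_{\Mon}(f)$ is a congruence containing $\Sigma_f$. Hence $m_f([x])=f(x)$ gives a monoid homomorphism with $m_f p_f=f$. To see that $m_f$ preserves atoms: every atom of $H/\theta_f$ has the form $[a]$ with $a\in\A(H)$. This is the step I expect to need care. Indeed, if $[x]$ is an atom, then $x$ is a non-unit, since $p_f$ reflects units as a morphism of $\AtoMon$. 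Write $x=a_1\cdots a_n$ as a product of atoms; then $[x]=[a_1]\cdots[a_n]$ is a product of non-units. Atomicity of $[x]$ then forces $n=1$, using that the quotient is Dedekind-finite. Therefore $m_f([a])=f(a)\in\A(K)$.

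\emph{Second factor.} Finally, $m_f$ is a monomorphism by the characterization in \cite[Remark~3.1(iv)]{CCT}: a morphism of $\AtoMon$ is monic exactly when it is injective on units and atoms. Suppose $[x],[y]$ are both atoms, or both units, and $m_f([x])=m_f([y])$. As above, we may pick representatives $x,y$ that are both atoms, or both units, of $H$. Then $f(x)=f(y)$, so $(x,y)\in\Sigma_f\subseteq\theta_f$ and hence $[x]=[y]$. Together with the first factor, this gives the required (regular epi, mono)-factorization of $f$.

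The main obstacle is the bookkeeping that atoms and units of $H/\theta_f$ are represented by atoms and units of $H$. Once that is settled, both atom-preservation of $m_f$ and its injectivity on $\A(H/\theta_f)\cup(H/\theta_f)^\times$ follow immediately.
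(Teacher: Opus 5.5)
Your proposal is correct and follows the paper's argument: $p_f$ is the coequalizer of the kernel pair by \cite[Proposition~6.1]{CCT}, and $m_f$ is a monomorphism because two atoms (or two units) of $H$ with the same image under $f$ form a pair in $\Sigma_f\subseteq\theta_f$. Your extra bookkeeping, namely that atoms and units of $H/\theta_f$ are represented by atoms and units of $H$ and hence that $m_f$ preserves atoms, is what the paper leaves to ``cf.\ the proof of \cite[Proposition~6.1]{CCT}''; the only slip is that deducing $x\notin H^\times$ from $[x]\notin(H/\theta_f)^\times$ uses that $p_f$ preserves units, not that it reflects them.
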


We conclude this subsection by describing the regular epimorphisms in $\AtoMon$.

Recall that {\em a strong epimorphism} in a category $\Cal C$ is an epimorphism which is left orthogonal to any monomorphism in $\Cal C$ (see \cite{Gran}).

\begin{proposition}\label{prop:strong-regular-epi}
For a morphism $f:H\to K$ in $\AtoMon$, let $f=m_fp_f$ be its (regular epi, mono)-facto\-ri\-za\-tion~\eqref{eq:epimonofact}. The following conditions are
equivalent:
\begin{enumerate}[label=\textup{(\arabic*)}]
\item\label{prop:strong-regular-epi 1} $f$ is a regular epimorphism;
\item\label{prop:strong-regular-epi 2} $f$ is a strong epimorphism;
\item\label{prop:strong-regular-epi 3} $m_f:H/\theta_f\to K$ is an isomorphism;
\item\label{prop:strong-regular-epi 4} $f$ is surjective and $\ker_{\Mon}(f)=\theta_f$.
\end{enumerate}
\end{proposition}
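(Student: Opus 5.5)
We prove the cycle \ref{prop:strong-regular-epi 1}$\Rightarrow$\ref{prop:strong-regular-epi 2}$\Rightarrow$\ref{prop:strong-regular-epi 3}$\Rightarrow$\ref{prop:strong-regular-epi 1}, and then treat \ref{prop:strong-regular-epi 3}$\Leftrightarrow$\ref{prop:strong-regular-epi 4} separately.

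The implication \ref{prop:strong-regular-epi 1}$\Rightarrow$\ref{prop:strong-regular-epi 2} holds in any category. A regular epimorphism is a coequalizer of some pair $u,v$. Given a commutative square $m g = h f$ with $m$ a monomorphism, we get $m g u = m g v$, hence $g u = g v$ because $m$ is mono. So $g$ factors through $f$, and uniqueness of the diagonal follows since $f$ is an epimorphism.

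For \ref{prop:strong-regular-epi 2}$\Rightarrow$\ref{prop:strong-regular-epi 3}, apply orthogonality of $f$ to the square $m_f p_f = \id_K f$, where $m_f$ is a monomorphism by Proposition~\ref{prop:image-factorization}. This gives $d\colon K\to H/\theta_f$ with $d f = p_f$ and $m_f d=\id_K$. Then $m_f d m_f p_f = m_f p_f$. Since $p_f$ is an epimorphism (being a coequalizer) and $m_f$ is a monomorphism, we get $d m_f=\id$. Thus $m_f$ is an isomorphism.

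For \ref{prop:strong-regular-epi 3}$\Rightarrow$\ref{prop:strong-regular-epi 1}, note that $p_f$ is the coequalizer of $r_1,r_2$ by \cite[Proposition~6.1]{CCT}. Hence $f=m_fp_f$, being a coequalizer followed by an isomorphism, is again a coequalizer of $r_1,r_2$.

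For \ref{prop:strong-regular-epi 3}$\Leftrightarrow$\ref{prop:strong-regular-epi 4}, we work at the level of underlying maps. The map $m_f$ is surjective exactly when $f$ is surjective. Moreover, $m_f([x])=m_f([y])$ means $f(x)=f(y)$, so $m_f$ is injective exactly when $\ker_{\Mon}(f)\subseteq\theta_f$. The reverse inclusion $\theta_f\subseteq\ker_{\Mon}(f)$ always holds: $\Sigma_f\subseteq\ker_{\Mon}(f)$ and $\ker_{\Mon}(f)$ is a congruence. So \ref{prop:strong-regular-epi 4} says precisely that $m_f$ is bijective. Since $m_f$ is a morphism of $\AtoMon$, Lemma~\ref{lem:cancel-atom-preservation} shows that bijectivity is equivalent to $m_f$ being an isomorphism in $\AtoMon$. (Conversely, an isomorphism in $\AtoMon$ is in particular a bijection.)

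The main obstacle is the step \ref{prop:strong-regular-epi 2}$\Rightarrow$\ref{prop:strong-regular-epi 3}. One has to make sure that monomorphisms in $\AtoMon$, which need not be injective, are still the right class for the orthogonality argument. One also has to check that $m_f$ is genuinely a monomorphism in $\AtoMon$; this was verified before Proposition~\ref{prop:image-factorization}, using the criterion of injectivity on units and atoms. Everything else is formal or a direct set-level check.
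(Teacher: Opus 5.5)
Your proposal is correct and follows essentially the same route as the paper. You use the same diagonal filler for the square $m_fp_f=\id_K f$ in (2)$\Rightarrow$(3), you observe that $f$ is the coequalizer $p_f$ followed by an isomorphism for (3)$\Rightarrow$(1), and for (3)$\Leftrightarrow$(4) you do the set-level bijectivity check on $m_f$ together with Lemma~\ref{lem:cancel-atom-preservation}. The only differences are cosmetic: you prove (1)$\Rightarrow$(2) directly instead of citing it, you verify $dm_f=\id$ by hand rather than invoking ``mono and split epi implies iso'', and you state explicitly the always-true inclusion $\theta_f\subseteq\ker_{\Mon}(f)$.
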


\begin{proof}
The implication \ref{prop:strong-regular-epi 1}$\Rightarrow$\ref{prop:strong-regular-epi 2} is valid in any category, see \cite[Proposition 1.8]{Gran}. To prove \ref{prop:strong-regular-epi 2}$\Rightarrow$\ref{prop:strong-regular-epi 3}, consider the commutative square
$$
\begin{matrix}
H&\xrightarrow{p_f}&H/\theta_f\\
{\scriptstyle f}\downarrow&&\downarrow{\scriptstyle m_f}\\
K&\xrightarrow{\id_K}&K.
\end{matrix}
$$
Since $f$ is a strong epimorphism and $m_f$ is a monomorphism, there is
a diagonal morphism $d:K\to H/\theta_f$ such that $ df=p_f,$ and $m_fd=\id_K.$ Thus $m_f$ is both a monomorphism and a split epimorphism, and hence it is an isomorphism. The implication \ref{prop:strong-regular-epi 3}$\Rightarrow$\ref{prop:strong-regular-epi 1} is clear. It remains to prove the equivalence between \ref{prop:strong-regular-epi 3} and
\ref{prop:strong-regular-epi 4}. 

Assume first that $m_f$ is an isomorphism. The quotient map $p_f$ is surjective, then so is $f$. Moreover, the kernel congruence of $p_f$ in $\Mon$ is $\theta_f$, so
$$
 \ker_{\Mon}(f)
 =\ker_{\Mon}(m_fp_f)
 =\ker_{\Mon}(p_f)
 =\theta_f
$$
and \ref{prop:strong-regular-epi 4} holds. Conversely, suppose that $f$ is surjective and
$\ker_{\Mon}(f)=\theta_f$.  We show that the underlying map of $m_f$ is
bijective. For $x,y\in H$, if $m_f([x])=m_f([y])$,
then $f(x)=f(y)$, so $(x,y)\in\ker_{\Mon}(f)=\theta_f$ and therefore $[x]=[y]$. Thus $m_f$ is injective. Given $z\in K$, choose $x\in H$
with $f(x)=z$. Then, $m_f([x])=z$, so $m_f$ is surjective. It follows that $m_f$ is a bijective morphism in $\AtoMon$, so an
isomorphism by Lemma~\ref{lem:cancel-atom-preservation}. This proves \ref{prop:strong-regular-epi 3}.
\end{proof}

\subsection{A regular epimorphism not stable under pullback}\label{subsec:regepi_notstable}

Notice that Proposition~\ref{prop:image-factorization} does not imply that $\AtoMon$ is a regular category. Regularity additionally requires regular epimorphisms to be stable under pullbacks. In this subsection we construct a regular epimorphism whose pullback is a surjective morphism which is not a regular epimorphism, showing that $\AtoMon$ is not regular.

\medskip

Consider the commutative atomic monoids
\begin{align*}
H&:=
\left\langle
 a_1,a_2,a_3,b,c_1,c_2,c_3,d
 \ \middle|\
 a_1b=a_2c_1,\;a_3c_2=c_3d
\right\rangle_{\mathrm{ab}},\\
Q&:=
\left\langle
 a,b,c,d
 \ \middle|\
 ab=ac,\; ac=cd
\right\rangle_{\mathrm{ab}},\\
J&:=
\left\langle
 a,b,c,d
 \ \middle|\
 ab=cd
\right\rangle_{\mathrm{ab}}.
\end{align*}
All the presentations above are
homogeneous and have no defining relation of degree one. Consequently,
the resulting monoids are reduced and atomic, and their atoms are
precisely their generators.

Define morphisms in $\AtoMon$
$$
 p:H\to Q
 \qquad\text{and}\qquad
 j:J\to Q
$$
by
$$
 p(a_i)=a,\qquad
 p(c_i)=c,\qquad
 p(b)=b,\qquad
 p(d)=d
$$
for every $i\in\{1,2,3\}$, and by letting $j$ send each generator of
$J$ to the generator of $Q$ with the same name. Notice that $p$ and $j$ are both well-defined as they preserve the defining relations of $H,J$ and $Q$.

\begin{lemma}\label{lem:monoregepi}
The morphism $j$ is a monomorphism, while $p$ is a regular epimorphism in $\AtoMon$.
\end{lemma}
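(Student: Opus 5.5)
For $j$, I would invoke the characterization of monomorphisms recalled in Subsection~\ref{subsec:atomic-preliminaries}: a morphism is a monomorphism in $\AtoMon$ exactly when it is injective on $J^\times\cup\A(J)$. Both $J$ and $Q$ are reduced, and their atoms are their generators. So $J^\times\cup\A(J)=\{1,a,b,c,d\}$, and $j$ maps this set onto $\{1,a,b,c,d\}\subseteq Q$.

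The only point to check is that $a,b,c,d$ are pairwise distinct in $Q$. The defining relations of $Q$ are homogeneous of degree two and never relate single letters. Hence no elementary application of a relation changes a word of length one, so distinct generators stay distinct in $Q$. This proves that $j$ is a monomorphism.

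For $p$, I would use the equivalence \ref{prop:strong-regular-epi 1}$\Leftrightarrow$\ref{prop:strong-regular-epi 4} of Proposition~\ref{prop:strong-regular-epi}. Surjectivity of $p$ is immediate, since every generator of $Q$ is hit. Since $\theta_p\subseteq\ker_{\Mon}(p)$ always holds, it suffices to show $\ker_{\Mon}(p)\subseteq\theta_p$.

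First, $\Sigma_p$ contains the pairs $(a_i,a_k)$ and $(c_i,c_k)$, because $p$ identifies them and they are atoms. So in $H/\theta_p$ we may identify $a_1=a_2=a_3=:\bar a$ and $c_1=c_2=c_3=:\bar c$. The two defining relations of $H$ then become $\bar a b=\bar a\bar c$ and $\bar a\bar c=\bar c d$.

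Next I would verify that the commutative monoid $H/\theta_p$ is presented by generators $\bar a,b,\bar c,d$ with exactly these relations. Indeed, $\theta_p$ is the congruence generated by $\Sigma_p$, and the pairs in $\Sigma_p$ are precisely pairs of atoms with equal image. Modulo the identifications $a_i\sim a_k$ and $c_i\sim c_k$, such pairs are trivial, because $p$ is injective on $\{\bar a,b,\bar c,d\}$. Therefore $H/\theta_p\cong\langle \bar a,b,\bar c,d\mid \bar ab=\bar a\bar c,\ \bar a\bar c=\bar cd\rangle_{\mathrm{ab}}$. This is exactly the presentation of $Q$, and the induced map $m_p$ sends generators to generators. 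Hence $m_p$ is a bijective morphism, which is an isomorphism by Lemma~\ref{lem:cancel-atom-preservation}. This gives condition \ref{prop:strong-regular-epi 3} directly.

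The main point needing care is the presentation claim for $H/\theta_p$. I would justify it with the universal property of presentations. The quotient of $H$ by the congruence generated by $a_i\sim a_k$ and $c_i\sim c_k$ is, by combining presentations, the monoid obtained by adjoining these identifications to the relations of $H$. After eliminating the redundant generators, this yields the stated presentation of $Q$. Finally, one checks that the remaining pairs in $\Sigma_p$ already hold there; in particular there are no nontrivial units to worry about, since $H$ is reduced.
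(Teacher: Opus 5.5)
Your proof is correct and follows essentially the same route as the paper. You show $j$ is a monomorphism because it is injective on the units and atoms of the reduced monoid $J$, and you show $p$ is a regular epimorphism because $\theta_p$ is generated by the identifications $a_i\sim a_k$, $c_i\sim c_k$, so that $H/\theta_p\cong Q$ via the map induced by $p$; this is condition (3) of Proposition~\ref{prop:strong-regular-epi}, and your opening appeal to condition (4) is an unneeded detour since you end up verifying (3) directly, exactly as the paper does.
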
 

\begin{proof}
Since $j$ is injective on the atoms of $J$ and $J$ is a reduced monoid, $j$ is a monomorphism in $\AtoMon$.
To show that $p$ is a regular epimorphism, we use the characterization of Proposition~\ref{prop:strong-regular-epi}. Since $H$ is reduced, the
congruence $\theta_p$ is generated by the set
$$
\Sigma_p=
   \{(a_i,a_j):i,j\in\{1,2,3\}\}
   \cup
   \{(c_i,c_j):i,j\in\{1,2,3\}\} \cup \{(b,b),(d,d), (1,1)\}.
$$
In $H/\theta_p$ set
$$
 A:=[a_1]=[a_2]=[a_3], \quad B:=[b], \quad C:=[c_1]=[c_2]=[c_3], \quad D:=[d].
$$
Then, 
$$
 H/\theta_p=
 \left\langle A,B,C,D\mid AB=AC,\ AC=CD
 \right\rangle_{\mathrm{ab}} \cong Q,
$$
and the isomorphism is induced by $p$. Proposition~\ref{prop:strong-regular-epi}(3) now shows that $p$ is a
regular epimorphism.
\end{proof}

Let us consider the pullback
$$
\begin{matrix}
P&\xrightarrow{\sigma}&H\\
{\scriptstyle \rho}\downarrow&&\downarrow{\scriptstyle p}\\
J&\xrightarrow{j}&Q \ .
\end{matrix}
$$
By \cite[Proposition~6.5]{CCT}, the monoid $P$ is the submonoid of
$H\times J$ generated by the pairs of atoms

 \begin{equation}\label{eq:ABCD}
 A_i:=(a_i,a),\qquad C_i:=(c_i,c)
 \quad \text{for }i\in\{1,2,3\},\qquad 
 B:=(b,b),\qquad D:=(d,d).
 \end{equation}

\begin{lemma}\label{lem:pullback-presentation}
The pullback object \(P\) admits the presentation
$$
P\cong \left\langle A_1,A_2,A_3,B,C_1,C_2,C_3,D \ \middle|\ A_1BA_3C_2=A_2C_1C_3D \right\rangle_{\mathrm{ab}}.
$$
Under this identification,
$$
\rho(A_i)=a,\qquad \rho(B)=b,\qquad \rho(C_i)=c,\qquad \rho(D)=d.
$$
\end{lemma}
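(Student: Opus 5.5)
We have to show that the submonoid $P\subseteq H\times J$ generated by the eight pairs in \eqref{eq:ABCD} is isomorphic to the commutative monoid
$$
\widetilde P:=\left\langle A_1,A_2,A_3,B,C_1,C_2,C_3,D \ \middle|\ A_1BA_3C_2=A_2C_1C_3D \right\rangle_{\mathrm{ab}}.
$$
The strategy is to exhibit a surjective homomorphism $\widetilde P\to P$ and then show that it is injective by computing kernels in each coordinate. The formula for $\rho$ will then be immediate from the definition of the second projection.

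\textbf{Step 1: the relation holds in $P$.} In $H$ we have $a_1b\cdot a_3c_2=a_2c_1\cdot c_3d$. In $J$ we need $a^2bc=ac^2d$, which follows from $ab=cd$ by multiplying both sides by $ac$. Hence the assignment on generators induces a homomorphism $\pi\colon\widetilde P\to P$. It is surjective because $P$ is generated by these pairs.

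\textbf{Step 2: injectivity.} Since all presentations are homogeneous, every element of $\widetilde P$ is a class of monomials $\mathbf A^{\alpha}B^{\beta}\mathbf C^{\gamma}D^{\delta}$, with exponent vectors $\alpha,\gamma\in\mathbb N^3$. Suppose two monomials $u,v$ have equal images in $H\times J$.
\begin{itemize}
\item The first coordinate says that the corresponding monomials in the free commutative monoid on $a_1,\dots,d$ are related by the congruence generated by $a_1b=a_2c_1$ and $a_3c_2=c_3d$.
\item The second coordinate says that the images in $\mathbb N^{\{a,b,c,d\}}$ are related by the congruence generated by $ab=cd$.
\end{itemize}

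\textbf{Step 3: invariants.} The plan is to describe these congruences by linear invariants and then check that together they pin down exactly the congruence of $\widetilde P$. For each commutative monoid I'll use a normal form: a monomial in $H$ is determined by its exponent vector modulo moves of the form $a_1b\leftrightarrow a_2c_1$ and $a_3c_2\leftrightarrow c_3d$. These are binomial (toric-like) congruences, but they need not be cancellative, so I'd rather argue with explicit moves.

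Here is the move analysis. Let $n_1$ be the net number of times the move $a_1b\to a_2c_1$ is applied, and $n_2$ the net number of times $a_3c_2\to c_3d$ is applied, in a chain from $u$ to $v$ computed in $H$. Projecting to $J$ via $a_i\mapsto a$, $c_i\mapsto c$:
\begin{itemize}
\item the first move becomes $ab\to ac$, which changes the exponent vector by $(0,-1,1,0)$;
\item the second move becomes $ac\to cd$, which changes it by $(-1,0,0,1)$.
\end{itemize}
In $J$, equality of images means the exponent difference is a multiple of $(1,1,-1,-1)$ (provided $J$ is cancellative and nondegenerate; the relation $ab=cd$ on $\mathbb N^4$ gives exactly this lattice congruence with the usual connectivity, which I'd check). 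Comparing $n_1(0,-1,1,0)+n_2(-1,0,0,1)$ with $k(1,1,-1,-1)$ forces $n_1=n_2=-k$. So both moves are used equally often in net terms. This suggests that the combined move $A_1BA_3C_2\leftrightarrow A_2C_1C_3D$ generates the congruence, since a single such move in $\widetilde P$ maps to one instance of each move in $H$.

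\textbf{Main obstacle.} Net counts are not enough: the actual chain in $H$ may interleave the two moves, and reach intermediate monomials, in ways that cannot be grouped into simultaneous applications. I'd handle this with a direct normal-form argument. In $H$, occurrences of the pair $a_1b$ can be traded for $a_2c_1$ freely, and these moves commute with those of the other type since they involve disjoint letters. So the $H$-class of a monomial is determined by:
\begin{itemize}
\item the exponents of letters untouched by the moves, together with the invariants $e_{a_1}+e_{a_2}$, $e_{b}+e_{a_2}$, $e_{c_1}-e_{a_2}$;
\item a similar triple for the second move;
\item the limits imposed by availability, namely $\min(e_{a_1},e_b)$ together with $e_{a_2}$ and similar bounds.
\end{itemize}
I'd verify that equal images under both projections force $u$ and $v$ to differ by $k$ applications of the combined relation, with enough letters present at each stage to perform them inside $\widetilde P$. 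The availability condition in $\widetilde P$, which needs all four letters present, is the delicate point. I expect to resolve it by showing that $u$ and $v$ can both be moved to a canonical form with minimal $\min(e_{A_1},e_B,e_{A_3},e_{C_2})$, and that this form is determined by the two coordinates.
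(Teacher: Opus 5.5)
\textbf{There is a gap: injectivity is never proved.} Your overall route is the paper's. You map the presented monoid $\widetilde P$ onto $P$ and prove injectivity with exponent-vector invariants in both coordinates. Your computation $n_1=n_2$ is exactly the paper's key step (there it reads $k=\ell=t$). The gap comes after that. You assert that ``net counts are not enough'' and defer injectivity to a canonical-form argument (minimizing $\min(e_{A_1},e_B,e_{A_3},e_{C_2})$, availability bounds on intermediate monomials). That argument is only sketched and never carried out, so as written the injectivity of $\pi$ is not established.

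\textbf{The obstacle is not real here, and you already have what you need.} You never have to lift the chain in $H$, or its intermediate monomials, to $\widetilde P$; only the endpoints matter. The relation $n_1=n_2=:k$ says that the exponent vectors $\mathbf r,\mathbf s\in\mathbb N^8$ of $U,V$ differ by $k$ times the sum of the two move vectors. In the order $A_1,A_2,A_3,B,C_1,C_2,C_3,D$, and after possibly swapping $U$ and $V$, this reads
\[
\mathbf r=\mathbf s+k(-1,1,-1,-1,1,-1,1,1),\qquad k\ge 0 .
\]
The two sides of $A_1BA_3C_2=A_2C_1C_3D$ have disjoint supports and all exponents equal to $1$, so this vector records both sides exactly. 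Thus $r_i=s_i+k\ge k$ at the letters $A_2,C_1,C_3,D$, and $s_i=r_i+k\ge k$ at $A_1,A_3,B,C_2$. Let $W$ be the monomial with exponents $\min(r_i,s_i)$. Then $U=W(A_2C_1C_3D)^k$ and $V=W(A_1BA_3C_2)^k$, hence $U=V$ in $\widetilde P$. This is how the paper concludes.

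\textbf{When your worry would be justified.} The availability problem does arise for a single relation whose two sides share a letter. For example, in $\langle x,y,z\mid xy=xz\rangle_{\mathrm{ab}}$ one has $y\neq z$, although $y-z$ lies in the relation lattice. That situation does not occur here.

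\textbf{Minor point.} Your caveat about cancellativity of $J$ is unnecessary. You only use the trivial direction: each elementary move in $J$ changes exponent vectors by $\pm(1,1,-1,-1)$.
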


\begin{proof}
Set
$$ M:= \left\langle A_1,A_2,A_3,B,C_1,C_2,C_3,D \ \middle|\ A_1BA_3C_2=A_2C_1C_3D \right\rangle_{\mathrm{ab}}. $$ 
Let $$ \Phi:M\to P $$ be the surjective monoid homomorphism determined by the correspondences in \eqref{eq:ABCD}.
The map is well defined, since the relation
\begin{equation}\label{eq:relation-M}
A_1BA_3C_2=A_2C_1C_3D
\end{equation}
is satisfied by the corresponding elements of both $H$ and $J$. We prove that $\Phi$ is injective. We first show how equalities in $H$ and $J$ can be expressed in terms of exponent vectors. Consider two elements
$$
u=a_1^{r_1}a_2^{r_2}a_3^{r_3}b^{r_4} c_1^{r_5}c_2^{r_6}c_3^{r_7}d^{r_8} \quad \text{and} \quad v=a_1^{s_1}a_2^{s_2}a_3^{s_3}b^{s_4} c_1^{s_5}c_2^{s_6}c_3^{s_7}d^{s_8}
$$
in $H$, where all the exponents are non-negative integers, and write
$$
\mathbf r=(r_1,\ldots,r_8), \qquad \mathbf s=(s_1,\ldots,s_8).
$$
If $u=v$ in $H$, then $u$ can be transformed into $v$ by applying finitely many times, in either direction, the relations
$$
a_1b=a_2c_1, \qquad a_3c_2=c_3d.
$$
Consequently, there exist $k,\ell\in\mathbb Z$ such that
$$
\mathbf r = \mathbf s +k(-1,1,0,-1,1,0,0,0) +\ell(0,0,-1,0,0,-1,1,1).
$$
Similarly, given two elements
$$
x=a^{m_1}b^{m_2}c^{m_3}d^{m_4} \quad \text{and} \quad y=a^{n_1}b^{n_2}c^{n_3}d^{n_4}
$$ in $J$, write the vectors of their non-negative exponents as
$$
\mathbf m=(m_1,m_2,m_3,m_4), \qquad \mathbf n=(n_1,n_2,n_3,n_4).
$$
If $x=y$ in $J$, then there exists $t\in\mathbb Z$ such that
$$
\mathbf m=\mathbf n+t(-1,-1,1,1).
$$
Now let $U,V\in M$, represented by
$$
U= A_1^{r_1}A_2^{r_2}A_3^{r_3}B^{r_4} C_1^{r_5}C_2^{r_6}C_3^{r_7}D^{r_8} \quad \text{and} \quad V= A_1^{s_1}A_2^{s_2}A_3^{s_3}B^{s_4} C_1^{s_5}C_2^{s_6}C_3^{s_7}D^{s_8},
$$
where all exponents are non-negative integers, and suppose that $\Phi(U)=\Phi(V)$. Then, $\sigma(\Phi(U))=\sigma(\Phi(V))$ in $H$ and from the previous observation, we obtain
$$
(r_1,\ldots,r_8) = (s_1,\ldots,s_8) +(-k,k,-\ell,-k,k,-\ell,\ell,\ell)
$$
for suitable $k,\ell\in\mathbb Z$.

On the other hand, $\rho(\Phi(U))=\rho(\Phi(V))$ in $J$.
Hence there exists $t\in\mathbb Z$ such that
$$
(r_1+r_2+r_3,\ r_4,\ r_5+r_6+r_7,\ r_8) = (s_1+s_2+s_3,\ s_4,\ s_5+s_6+s_7,\ s_8) +t(-1,-1,1,1).
$$
Combining the two vector equalities, we get $k=\ell=t$. It follows that
$$
(r_1,\ldots,r_8) = (s_1,\ldots,s_8) +k(-1,1,-1,-1,1,-1,1,1).
$$
If $k \geq 0$, then there exists an element $W \in M$ such that
$$
U=W(A_2C_1C_3D)^k \quad \text{and} \quad V=W(A_1BA_3C_2)^k
$$
from which $U=V$. If $k\leq 0$ it suffices to interchange the roles of $U$ and $V$. Thus, $\Phi$ is injective.
\end{proof}

\begin{proposition}\label{prop:not-regular}
The pullback morphism
$$
 \rho:P\to J
$$
is not a regular epimorphism. Consequently, $\AtoMon$ is not a regular
category.
\end{proposition}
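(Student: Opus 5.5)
We plan to use the characterization in Proposition~\ref{prop:strong-regular-epi}, specifically condition~\ref{prop:strong-regular-epi 4}. The morphism $\rho$ is clearly surjective, since its image contains all generators $a,b,c,d$ of $J$. It therefore suffices to show that $\ker_{\Mon}(\rho)\neq\theta_\rho$. Equivalently, we must find two elements of $P$ with the same image in $J$ that are not identified by the congruence generated by $\Sigma_\rho$.

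First we compute $\Sigma_\rho$. By Lemma~\ref{lem:pullback-presentation}, $P$ is reduced and its atoms are $A_1,A_2,A_3,B,C_1,C_2,C_3,D$, because the single relation is homogeneous of degree $4$. The pairs of atoms with equal images under $\rho$ are $(A_i,A_j)$, $(C_i,C_j)$, $(B,B)$ and $(D,D)$. Hence $P/\theta_\rho$ is obtained by adding to the presentation of $P$ the relations $A_1=A_2=A_3$ and $C_1=C_2=C_3$. Writing $A$ and $C$ for the common classes, we get
$$
P/\theta_\rho\cong\left\langle A,B,C,D \ \middle|\ A^2BC=AC^2D\right\rangle_{\mathrm{ab}}.
$$

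Next we choose the witness elements $U:=A_1B$ and $V:=C_1D$ in $P$. We have $\rho(U)=ab=cd=\rho(V)$ in $J$, so $(U,V)\in\ker_{\Mon}(\rho)$. In the quotient, the classes of $U$ and $V$ are $AB$ and $CD$. The key step is to show that $AB\neq CD$ in this presentation. The only relation replaces a degree-$4$ word by another degree-$4$ word, so any element of degree $2$ has a singleton congruence class. Since $AB$ and $CD$ are distinct degree-$2$ monomials, they are not equal. Alternatively, one can argue with exponent vectors, as in the proof of Lemma~\ref{lem:pullback-presentation}: the difference of two equal elements must be an integer multiple of $(1,1,-1,-1)$, while $(1,1,-1,-1)$ is not reachable because each elementary move requires degree at least $4$. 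Therefore $(U,V)\notin\theta_\rho$, so $\ker_{\Mon}(\rho)\neq\theta_\rho$, and by Proposition~\ref{prop:strong-regular-epi} the morphism $\rho$ is not a regular epimorphism.

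Finally, $\rho$ is the pullback of the regular epimorphism $p$ (Lemma~\ref{lem:monoregepi}) along $j$. So regular epimorphisms in $\AtoMon$ are not pullback-stable, and $\AtoMon$ is not regular. The main point needing care is identifying $\theta_\rho$ correctly, namely checking that $\Sigma_\rho$ contains only the listed pairs. This relies on $P$ being reduced with exactly the eight atoms listed, which follows from the presentation in Lemma~\ref{lem:pullback-presentation}. The degree argument then settles the non-equality.
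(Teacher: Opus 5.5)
Your proof is correct and follows essentially the same route as the paper. Both arguments check that $\rho$ is surjective, identify $P/\theta_\rho\cong\langle A,B,C,D\mid A^2BC=AC^2D\rangle_{\mathrm{ab}}$, separate the classes of $A_1B$ and $C_1D$ by the degree argument (the paper uses $C_3D$, which makes no difference), and conclude via Proposition~\ref{prop:strong-regular-epi}(4); one small point is that your ``alternative'' exponent-vector argument is not independent, since $(1,1,-1,-1)$ is exactly the relation's difference vector, so it is the degree-$4$ observation that does the work.
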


\begin{proof}
The morphism $\rho$ is surjective, since its image contains all the generators $a,b,c,d$ of $J$.
Recall that, since $P$ is reduced, the kernel pair $\ker(\rho)$ of $\rho$ is generated by the set
$$
\Sigma_\rho=\{(A_i,A_j):i,j\in\{1,2,3\}\}
 \cup
 \{(C_i,C_j):i,j\in\{1,2,3\}\} \cup \{(B,B), (D,D), (1,1)\}.
$$

Let $\theta_\rho$ be the corresponding monoid congruence. Consider the elements
$$
 x:=A_1B
 \qquad\text{and}\qquad
 y:=C_3D
$$
of $P$. Their images under $\rho$ satisfy
$$
 \rho(x)=ab=cd=\rho(y)
$$
in $J$. Hence
$$
 (x,y)\in\ker_{\Mon}(\rho).
$$

We prove directly that $(x,y)\notin\theta_\rho$.  Quotienting the
presentation of $P$ of Lemma~\ref{lem:pullback-presentation} by $\theta_\rho$ gives
$$
 P/\theta_\rho\cong
 \left\langle
 A,B,C,D
 \ \middle|\
 A^2BC=AC^2D
 \right\rangle_{\mathrm{ab}}.
$$
The classes of $x=A_1B$ and $y=C_3D$ are represented by $AB$ and
$CD$, respectively.  These elements are distinct in the quotient, because the relation only acts on words of length at least four. Hence 
$$
 (x,y)\notin\theta\rho
$$
and by Proposition~\ref{prop:strong-regular-epi}(4), $\rho$ is not a regular epimorphism.
Since $\rho$ is the pullback of the regular epimorphism $p$ along the monomorphism $j$, this shows that regular epimorphisms in $\AtoMon$ are not stable under pullback. Hence $\AtoMon$ is not regular.
\end{proof}

\begin{remark}
Notice that Proposition~\ref{prop:not-regular} implies, in particular, that the category $\AtoMon$ is not Barr-exact, hence it cannot be equivalent to a variety of universal algebras (see \cite{AR,Borceux} for the definitions).
Nevertheless, since $\AtoMon$ is locally (finitely) presentable, from the Gabriel--Ulmer duality (cf. \cite[Chapter~5]{AR}), it follows that $\AtoMon$ is equivalent to the category of models of a finitary finite-limit theory, and hence it is a finitary essentially algebraic category. More precisely, let $\AtoMon_{\mathrm{fp}}$ be a small skeleton of the full subcategory of compact objects of $\AtoMon$. Then the restricted Yoneda functor induces an equivalence
$$
\AtoMon \simeq \operatorname{Lex}\bigl(\AtoMon_{\mathrm{fp}}^{\mathrm{op}},\Set\bigr).
$$
\end{remark}

\subsection{The regular completion}
\label{subsec:regular-completion}

Recall from \cite{CarboniVitale} that any category $\Cal C$ with weak limits can be embedded into a regular category, called the {\em regular completion of $\Cal C$}. We briefly describe this construction for the case of $\AtoMon$. Notice that, since $\AtoMon$ is complete, the description given in \cite[Definition~7]{CarboniVitale} simplifies.

The regular completion $\AtoMon_{\mathrm{reg}}$ of $\AtoMon$ is the category whose objects are the
morphisms of $\AtoMon$. Given two such objects
$$
    f\colon H\to K,
    \qquad
    g\colon L\to M,
$$
let $(\ker(f),r_1,r_2)$ be the kernel pair of $f$ in $\AtoMon$. A morphism from $f$ to $g$
is represented by a morphism in $\AtoMon$
$$
\alpha\colon H\to L \qquad \text{such that}  \qquad g\alpha r_1=g\alpha r_2.
$$
Two representatives $\alpha,\alpha'\colon H\to L$ determine the same
morphism if $g\alpha=g\alpha'$. We denote the corresponding equivalence class by $[\alpha]$. Composition
and identities are induced by those of $\AtoMon$ and the canonical embedding
$$
    \Gamma\colon
    \AtoMon\to \AtoMon_{\mathrm{reg}}
$$
is given by
$$
    \Gamma(H)=(\operatorname{id}_H\colon H\to H),
    \qquad
    \Gamma(h)=[h].
$$
By \cite[Theorem~8, Proposition~9]{CarboniVitale}, $\AtoMon_{\mathrm{reg}}$ is a regular category and $\Gamma(\AtoMon)$ is a projective cover of $\AtoMon_{\mathrm{reg}}$. The morphisms of this completion can be described using the congruences $\theta_f$.

\begin{proposition}
\label{prop:hom-regular-completion-theta}
Let $f\colon H\to K$ and $g\colon L\to M$ be morphisms in $\AtoMon$. Then
$$
\operatorname{hom}_{\AtoMon_{\mathrm{reg}}}(f,g)
\cong
\left\{
\alpha\colon H\to L:
\theta_f\subseteq\ker_{\mathrm{Mon}}(g\alpha)
\right\}/ \sim
$$
where $\sim$ is the equivalence relation defined by $\alpha\sim\alpha'$ if and only if $g\alpha=g\alpha'$.
\end{proposition}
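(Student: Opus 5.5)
The plan is to show that, for a morphism $\alpha\colon H\to L$ in $\AtoMon$, the condition $g\alpha r_1=g\alpha r_2$ from the definition of morphisms in $\AtoMon_{\mathrm{reg}}$ is equivalent to $\theta_f\subseteq\ker_{\Mon}(g\alpha)$. Since both sides are then quotients of the same set of $\alpha$'s by the same relation $\alpha\sim\alpha'\iff g\alpha=g\alpha'$, the bijection is simply the identity on representatives. So everything reduces to this one equivalence.

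First I will check that the condition depends only on the underlying monoid map. Since $(\ker(f),r_1,r_2)$ is the kernel pair of $f$ in $\AtoMon$, and by \cite[Theorem 6.5]{CCT} $\ker(f)$ is the submonoid of $H\times H$ generated by $\Sigma_f$, the maps $r_1,r_2$ are the two projections restricted to $\ker(f)$. Hence $g\alpha r_1=g\alpha r_2$ holds if and only if $g\alpha(x)=g\alpha(y)$ for all $(x,y)\in\ker(f)$. Because $\ker(f)$ is generated by $\Sigma_f$ as a monoid, and both $g\alpha r_1$ and $g\alpha r_2$ are monoid homomorphisms, it suffices to test this on generators. The condition is therefore equivalent to $\Sigma_f\subseteq\ker_{\Mon}(g\alpha)$.

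Next, $\ker_{\Mon}(g\alpha)$ is a monoid congruence on $H$, being the kernel congruence of a monoid homomorphism. By definition $\theta_f$ is the smallest congruence containing $\Sigma_f$. Hence $\Sigma_f\subseteq\ker_{\Mon}(g\alpha)$ is equivalent to $\theta_f\subseteq\ker_{\Mon}(g\alpha)$; the converse direction is immediate from $\Sigma_f\subseteq\theta_f$.

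Finally, I will note that composition and identities are not at stake, since the statement only concerns hom-sets. The main, though mild, obstacle is the first step. There one must use the explicit description of the kernel pair in $\AtoMon$, which is the submonoid generated by $\Sigma_f$ and not the set-theoretic kernel $\ker_{\Mon}(f)$. One must also make sure that equality of the two composites can be checked on generators, which holds because the composites are monoid homomorphisms out of a monoid generated by $\Sigma_f$.
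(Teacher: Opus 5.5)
Your proposal is correct and follows essentially the same route as the paper: you reduce the statement to the equivalence $g\alpha r_1=g\alpha r_2 \iff \theta_f\subseteq\ker_{\Mon}(g\alpha)$, using that $\ker_{\Mon}(g\alpha)$ is a congruence and $\theta_f$ is the congruence generated by the kernel pair. The only difference is that you pass explicitly through the generating set $\Sigma_f$ (checking on generators), whereas the paper phrases the same step directly as $\theta_f\subseteq\ker_{\Mon}(g\alpha)\iff\ker(f)\subseteq\ker_{\Mon}(g\alpha)$.
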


\begin{proof}
Let $(\ker(f),r_1,r_2)$ be the kernel pair of $f$ in $\AtoMon$ and recall that $\theta_f$ is the smallest monoid congruence containing $\ker(f)$. Thus $\theta_f \subseteq \ker_\Mon(g\alpha)$ if and only if $\ker(f)\subseteq \ker_\Mon(g\alpha)$. This last condition is equivalent to the fact that $g\alpha r_1=g\alpha r_2$.
\end{proof}

We now return to the regular epimorphism $p\colon H\to Q$ of Lemma~\ref{lem:monoregepi}, which determines an object of $\AtoMon_{\mathrm{reg}}$. For the sake of clarity, we denote such object as $\mathbf p=(p\colon H\to Q)$ to distinguish it from $p$ seen in $\AtoMon$.

Notice that, since split epimorphisms are stable under pullbacks, Proposition~\ref{prop:not-regular} implies that $p\colon H \to Q$ is not a split epimorphism.

\begin{proposition}
\label{prop:p-not-in-essential-image}
The object $\mathbf p=(p\colon H\to Q)$ does not belong to the essential
image of
$$
    \Gamma\colon
    \AtoMon\to\AtoMon_{\mathrm{reg}}.
$$
Equivalently, $\mathbf p$ is not isomorphic to $\Gamma(K)$ for any atomic
monoid $K$.
\end{proposition}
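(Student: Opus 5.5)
The plan is to argue by contradiction. An isomorphism $\mathbf p\cong\Gamma(K)$ would produce a section of $p$ in $\AtoMon$. That contradicts the observation made just before the statement: $p$ is not a split epimorphism, because split epimorphisms are stable under pullback and, by Proposition~\ref{prop:not-regular}, the pullback $\rho$ of $p$ is not even regular.

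Suppose there are mutually inverse morphisms $[\alpha]\colon\mathbf p\to\Gamma(K)$ and $[\beta]\colon\Gamma(K)\to\mathbf p$ in $\AtoMon_{\mathrm{reg}}$, where $\Gamma(K)=(\id_K\colon K\to K)$.

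\emph{Unwinding the morphisms.} By the description of morphisms preceding Proposition~\ref{prop:hom-regular-completion-theta}, we read off the following.
\begin{enumerate}[label=(\roman*)]
\item The class $[\alpha]$ is represented by a morphism $\alpha\colon H\to K$ in $\AtoMon$ with $\id_K\alpha r_1=\id_K\alpha r_2$, where $(\ker(p),r_1,r_2)$ is the kernel pair of $p$. Two representatives $\alpha,\alpha'$ are identified exactly when $\alpha=\alpha'$.
\item The class $[\beta]$ is represented by a morphism $\beta\colon K\to H$. The compatibility condition is $p\beta s_1=p\beta s_2$, where $(s_1,s_2)$ is the kernel pair of $\id_K$. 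Since $\id_K$ is a monomorphism, $s_1=s_2$, so the condition is automatic.
\item Composition is induced by composition in $\AtoMon$, and two representatives into $\mathbf p$ are identified when they agree after composing with $p$. Hence $[\beta][\alpha]=\id_{\mathbf p}$ means $p\beta\alpha=p$, and $[\alpha][\beta]=\id_{\Gamma(K)}$ means $\alpha\beta=\id_K$.
\end{enumerate}

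\emph{Building a section.} By Lemma~\ref{lem:monoregepi}, $p$ is a regular epimorphism. Its (regular epi, mono)-factorization~\eqref{eq:epimonofact} shows that it is the coequalizer of its kernel pair $r_1,r_2$. Since $\alpha r_1=\alpha r_2$, there is a unique $\gamma\colon Q\to K$ in $\AtoMon$ with $\alpha=\gamma p$. Set $s:=\beta\gamma\colon Q\to H$. Then
$$
p s p = p\beta\gamma p = p\beta\alpha = p .
$$
Since $p$ is an epimorphism, $ps=\id_Q$, so $p$ is a split epimorphism in $\AtoMon$.

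\emph{Contradiction.} If $p$ were split by $s$, its pullback $\rho$ along $j$ would also be split, via the section induced by $(\id_J, sj)$ through the pullback $P$. A split epimorphism is a regular epimorphism, so $\rho$ would be regular, which contradicts Proposition~\ref{prop:not-regular}. Notice that only the identity $[\beta][\alpha]=\id_{\mathbf p}$ was used; the other identity $\alpha\beta=\id_K$ was not needed.

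The step requiring the most care is the bookkeeping of the equivalence relation and the compatibility conditions in $\AtoMon_{\mathrm{reg}}$. In particular one must check that the identity $[\beta][\alpha]=\id_{\mathbf p}$ is correctly translated into $p\beta\alpha=p$, rather than $\beta\alpha=\id_H$. Once that is settled, the argument is formal.
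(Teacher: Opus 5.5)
Your proof is correct and follows the paper's argument essentially step for step. You translate the isomorphism into $p\beta\alpha=p$, factor $\alpha$ through $p$ as the coequalizer of its kernel pair, and obtain the section $\beta\gamma$ of $p$; this contradicts the fact, from Proposition~\ref{prop:not-regular} together with pullback-stability of split epimorphisms, that $p$ is not split. Your remark that $\alpha\beta=\id_K$ is never used applies to the paper's proof too, which records that identity but does not need it.
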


\begin{proof}
Suppose that $\mathbf p\cong\Gamma(K)$ for some $K\in\AtoMon$.
Choose inverse morphisms
$$
    [a]\colon\mathbf p\to\Gamma(K),
    \qquad
    [b]\colon\Gamma(K)\to\mathbf p,
$$
represented by morphisms $a\colon H\to K$ and $b\colon K\to H$ in $\AtoMon$.
The equalities $[ab]=[\operatorname{id}_K]$ and $[ba]=\operatorname{id}_\mathbf{p}$ yield
$$
pba=p \qquad \text{and} \qquad ab=\operatorname{id}_K.
$$
Moreover, let $(\ker(p),r_1,r_2)$ be the kernel pair of $p$ in $\AtoMon$. Since $ar_1=ar_2$, there exists a unique morphism $c\colon Q \to K$ such that $a=cp$. Hence, we get $pbcp=p$ and, since $p$ is an epimorphism, $pbc=\operatorname{id}_Q$. This is a contradiction since $p$ is not a split epimorphism (indeed, split epimorphisms are stable under pullbacks, and Proposition~\ref{prop:not-regular} implies that $p\colon H \to Q$ is not a split epimorphism).
\end{proof}

\begin{remark}
The object $\mathbf p$ is therefore a genuinely new object of the regular
completion. It is not represented by an atomic monoid; rather, it is the
image object that must be adjoined in order to obtain a pullback-stable
(regular epi, mono)-factorization of $\Gamma(p)$ in $\AtoMon_\mathrm{reg}$, which becomes
$$
\Gamma(H) \xrightarrow{[\operatorname{id}_H]} \mathbf{p} \xrightarrow{\;[p]\;} \Gamma(Q).
$$
\end{remark}

\section{Adjunctions for natural functors}\label{sec:adjunctions}

In this section, we consider some natural functors among the categories $\Grp, \Mon$ and $\AtoMon$ and establish adjunction between them.

\subsection{Right and left adjoints for the group of units}\label{subsec:grpunits}
Since morphisms in $\AtoMon$ send units into units (being monoid homomorphisms), the assignment $H\to H^\times$ defines a functor
$$
(-)^\times: \AtoMon \to  \mathsf{Grp}.
$$
We show in the next proposition that this functor admits left and right adjoints. 

Let $T:=\{0,a\mid a^2=0\}$ be the semigroup obtained from the terminal
object of $\AtoMon$ by deleting its identity.  Thus
$$
 a^2=a0=0a=0^2=0.
$$
For a group $G$, define
$$
 I(G):=G\oslash T
$$
as the monoid whose underlying set is the disjoint union $G\sqcup\{a,0\}$, and whose operation extends those of $G$ and $T$ by putting
$$
 ga=ag=a,
 \qquad
 g0=0g=0
 \qquad(g\in G).
$$
Then
$$
 I(G)^\times=G,
 \qquad
 \A(I(G))=\{a\},
$$
and $I(G)$ is clearly an atomic monoid. Note that, in the terminology in \cite[Section~3]{CT4}, the monoid $I(G)$ is the {\em trivial ideal extension of $T$ by $G$}.

For a group homomorphism $\varphi:G\to G'$, define
$$
 I(\varphi):I(G)\to I(G')
$$
by $I(\varphi)|_G=\varphi$, $I(\varphi)(a)=a$, and $I(\varphi)(0)=0$.  Clearly $I(\varphi)$ is a monoid homomorphism sending atoms to atoms, so it is a morphism in $\AtoMon$. We therefore obtain a functor
$$
I:\Grp\to\AtoMon.
$$

\begin{proposition}\label{prop:adjunctions}
There is a chain of adjunctions $\iota\dashv(-)^\times\dashv I$, where $\iota:\Grp\to\AtoMon$ is the inclusion functor.
$$
    \xymatrix@!=20pt{
    \AtoMon \ar[rr]_{\;\;\;(-)^\times}^\perp  & & \mathsf{Grp} \ar@/_2.1pc/[ll]_{\iota} \ar@/^2.1pc/[ll]^I_\perp 
    }
$$

\end{proposition}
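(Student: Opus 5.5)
We prove the two adjunctions separately by exhibiting natural bijections of hom-sets.

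For $\iota\dashv(-)^\times$, take a group $G$ and an atomic monoid $H$. A group, viewed as a monoid, has no atoms and is atomic, so every monoid homomorphism $G\to H$ is a morphism of $\AtoMon$ vacuously. Any monoid homomorphism sends $G$ into $H^\times$, because it sends inverses to inverses. Hence
\[
\Hom_{\AtoMon}(\iota G,H)=\Hom_{\Mon}(G,H)\cong\Hom_{\Grp}(G,H^\times),
\]
where the bijection is corestriction to $H^\times$ one way and composition with the inclusion $H^\times\hookrightarrow H$ the other way. Naturality in $G$ and $H$ is immediate, since both maps are just (co)restrictions of the same function.

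For $(-)^\times\dashv I$, take an atomic monoid $H$ and a group $G$. Given a group homomorphism $\varphi\colon H^\times\to G$, define $\widehat\varphi\colon H\to I(G)$ by
\[
\widehat\varphi(u)=\varphi(u)\ (u\in H^\times),\qquad \widehat\varphi(x)=a\ (x\in\A(H)),\qquad \widehat\varphi(x)=0\ \text{otherwise}.
\]
Multiplicativity is checked case by case, using that a product of two elements is a unit if and only if both factors are (Dedekind-finiteness). The cases are:
\begin{enumerate}[label=\textup{(\roman*)}]
\item if $u$ is a unit and $x$ an atom, then $ux$ and $xu$ are atoms, since the unit group acts on atoms, and $a$ absorbs $G$;
\item if $u$ is a unit and $x$ is a reducible non-unit, then $ux$ is again a reducible non-unit, and $0$ absorbs $G$;
\item if $x,y$ are both non-units, then $xy$ is a non-unit that is not an atom, since it is a product of two non-units, and indeed $a^2=a0=0^2=0$ in $I(G)$.
\end{enumerate}
The map $\widehat\varphi$ preserves atoms by construction.

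Conversely, every morphism $f\colon H\to I(G)$ in $\AtoMon$ preserves and reflects units, atoms, and reducible non-units (Subsection~\ref{subsec:atomic-preliminaries}). Hence $f$ sends units into $G$, atoms to $a$, and all other elements to $0$, because $I(G)$ has exactly one atom and exactly one reducible non-unit. Therefore $f=\widehat{f|_{H^\times}}$. This gives a bijection
\[
\Hom_{\AtoMon}(H,I(G))\cong\Hom_{\Grp}(H^\times,G).
\]
Naturality in $G$ holds because $I(\psi)$ is $\psi$ on $G$ and fixes $a$ and $0$. Naturality in $H$ holds because morphisms $g\colon H'\to H$ of $\AtoMon$ preserve the three classes, so $\widehat\varphi\circ g=\widehat{\varphi\circ g^\times}$.

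The main obstacle is the case analysis for multiplicativity of $\widehat\varphi$, specifically case (i): one must confirm that multiplying an atom by a unit yields an atom. This follows because if $ux=yz$ then $x=(u^{-1}y)z$, so one of $u^{-1}y$ or $z$ is a unit, and hence $y$ or $z$ is a unit.
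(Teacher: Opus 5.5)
Your proof is correct and takes the same approach as the paper. For $\iota\dashv(-)^\times$ you use corestriction to $H^\times$ and extension of the codomain, and for $(-)^\times\dashv I$ you use the three-case extension $\widehat\varphi$ with restriction to $H^\times$ as its inverse; you also write out the multiplicativity check, the uniqueness argument via preservation and reflection of units, atoms and reducible non-units, and the naturality, all of which the paper treats as routine.
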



\begin{proof}
For the adjunction $\iota\dashv(-)^\times$, let $G\in\Grp$ and $H\in\AtoMon$. We want to show that there is a bijection
$$
\hom_\AtoMon(G, H) \to \hom_{\mathsf{Grp}}(G,H^\times),
$$
which is natural both in $G$ and $H$. Every monoid homomorphism $f:G\to H$ has image in $H^\times$, so its corestriction $G \to H^\times$ is a group homomorphism. Conversely, every group homomorphism $G\to H^\times$ becomes a monoid homomorphism $G\to H$ after extending its codomain; preservation of atoms is vacuous because a group has no atoms. This gives the first natural bijection.

For the adjunction $(-)^\times\dashv I$, let $G\in\Grp$ and $H\in\AtoMon$. We want to show that there is a bijection
$$
\hom_{\mathsf{Grp}}(H^\times, G) \to \hom_\AtoMon(H, I(G)),
$$
which is natural both in $H$ and $G$. Any group homomorphism $\varphi \colon H^\times \to G$ extends uniquely to a morphism $\hat{\varphi}\colon H\to I(G)$ of $\AtoMon$ via the assignment
$$
 \hat{\varphi}(x)=
 \begin{cases}
   \varphi(x),&x\in H^\times,\\
   a,&x\in\A(H),\\
   0,&x\in H\setminus(H^\times\cup\A(H)).
 \end{cases}
$$
This defines the required bijection. The inverse assigns to any atom-preserving monoid homomorphism $h\colon H \to I(G)$, the (co)restriction $h|_{H^\times}\colon H^\times \to G$. It is routine to check that this bijection is natural both in $H$ and $G$.
\end{proof}

\subsection{A right adjoint to the underlying-monoid functor}\label{subsec:atomization}

Let us consider the inclusion functor
$$
 U:\AtoMon\to\Mon.
$$
It preserves all small colimits (cf. \cite[Theorem~6.3]{CCT}) and hence it admits a right adjoint by the Adjoint Functor Theorem for locally presentable categories (see \cite[Theorem~1.66]{AR}). In this section, we provide an explicit construction of the right adjoint to $U$.

Let $\mathbb 1:=\{1,a,0\}$ be the terminal object of $\AtoMon$. For a monoid $M$, define the submonoid of $\mathbb 1\times M$
$$
 \At(M):=
 \bigl(\{1\}\times M^\times\bigr)
 \cup
 \bigl(\{a,0\}\times M\bigr)
 \subseteq \mathbb 1\times M,
$$
with componentwise multiplication. It is routine to check that $\At(M)$ is an atomic monoid with
$$
 \At(M)^\times=\{(1,u):u\in M^\times\},
 \qquad
 \A(\At(M))=\{(a,m):m\in M\}.
$$
If $h:M\to N$ is a monoid
homomorphism, set
$$
 \At(h)\colon \At(M) \to \At(N), \qquad \At(h)(t,m):=(t,h(m)).
$$
This is a well-defined monoid homomorphism that sends atoms to atoms. Consequently, $\At$ defines an ``atomization'' functor
$$
\At:\Mon\to\AtoMon.
$$

\begin{proposition}\label{prop:underlying-right-adjoint}
The functor $\At:\Mon\to\AtoMon$ is right adjoint to $U$
$$
\xymatrix{ *+[l]{\AtoMon} \ar@/^7pt/[r]^U \ar@{}[r]|{\perp} & *+[r]{\Mon} \ar@/^7pt/[l]
^{\At}}.
$$
For each monoid $M$, the $M$-component of the counit is the projection
$$
 \varepsilon_M:U(\At(M))\to M,
 \qquad
 \varepsilon_M(t,m):=m.
$$
\end{proposition}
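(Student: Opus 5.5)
The plan is to verify the universal property of the counit directly. For an atomic monoid $H$ and a monoid $M$, we must show that every monoid homomorphism $g\colon U(H)\to M$ factors as $g=\varepsilon_M\circ U(\hat g)$ for a unique morphism $\hat g\colon H\to\At(M)$ in $\AtoMon$. First we record that $\varepsilon_M$ is a monoid homomorphism, since it is the restriction of the second projection $\mathbb 1\times M\to M$. Naturality of $\varepsilon$ in $M$ holds because $\varepsilon_N\circ U(\At(h))(t,m)=h(m)=h\circ\varepsilon_M(t,m)$.

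For existence we use the unique morphism $\tau_H\colon H\to\mathbb 1$ to the terminal object of $\AtoMon$. It sends units to $1$, atoms to $a$, and the remaining non-units to $0$, because by the preliminaries morphisms preserve and reflect these three classes. We then define
$$
\hat g(x):=(\tau_H(x),g(x)).
$$
Next we check that $\hat g$ lands in $\At(M)$. If $x\in H^\times$, then $g(x)\in M^\times$, so $(1,g(x))\in\{1\}\times M^\times$. Otherwise $\tau_H(x)\in\{a,0\}$, and there is no constraint on $g(x)$. The map $\hat g$ is a monoid homomorphism because both of its components are. It is atom-preserving because atoms $x$ go to $(a,g(x))$, and these are exactly the atoms of $\At(M)$ by the stated description of $\A(\At(M))$. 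Finally, $\varepsilon_M\circ\hat g=g$ holds by construction.

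For uniqueness, let $k\colon H\to\At(M)$ be any $\AtoMon$-morphism with $\varepsilon_M k=g$. Its second component is forced to be $g$. Its first component is $\pi_1\circ k\colon H\to\mathbb 1$, where $\pi_1\colon\At(M)\to\mathbb 1$ is the first projection. We argue that $\pi_1$ is itself an $\AtoMon$-morphism, since it sends the atoms $(a,m)$ to $a$. Hence $\pi_1\circ k=\tau_H$ by terminality, and therefore $k=\hat g$.

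The main point needing care is the claim that $\At(M)$ is atomic with the stated units and atoms. The paper calls this routine, but uniqueness depends on it through the description of atoms. If needed, we would verify it directly. A pair $(t,m)$ is a unit iff $t=1$ and $m\in M^\times$. Each element $(a,m)$ is an atom, because any factorization $(a,m)=(s,x)(s',y)$ must have one first coordinate equal to $1$, so the corresponding factor is a unit. Each element $(0,m)$ equals $(a,1)(a,m)$, so it is a product of atoms. This settles atomicity and identifies the atoms.
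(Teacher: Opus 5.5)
Your proof is correct and follows essentially the same route as the paper. Your map $\hat g=(\tau_H,g)$ is exactly the paper's case-wise $\hat f$, and the bijection is inverted by $k\mapsto\varepsilon_M U(k)$. The only difference is one of packaging: you get the homomorphism property and uniqueness of the first coordinate from terminality of $\mathbb 1$, together with the fact that the first projection $\At(M)\to\mathbb 1$ is atom-preserving, whereas the paper appeals directly to Dedekind-finiteness and to the preservation and reflection of units and atoms.
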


\begin{proof}
We show that there is a natural bijection
$$
 \Hom_{\Mon}(U(H),M)
 \cong
 \Hom_{\AtoMon}(H,\At(M))
$$
for every $H\in\AtoMon$ and every $M\in\Mon$.
Let $f:U(H)\to M$ be a monoid homomorphism.  Define a map $\hat{f}\colon H \to \At(M)$ by
$$
 \hat{f}(x):=
 \begin{cases}
  (1,f(x)) & x\in H^\times,\\
  (a,f(x)) & x\in\A(H),\\
  (0, f(x)) & x\in H\setminus(H^\times\cup\A(H)).
 \end{cases}
$$
This is well-defined since if $x$ is a unit, then $f(x)$ is a unit of $M$. Moreover, $\hat{f}$ is a monoid homomorphism, because atomic monoids are Dedekind-finite, that is, a product $xy \in H$ is a unit if and only if both $x$ and $y$ are units of $H$. Clearly, $\hat{f}$ also preserves atoms, so it is a morphism in $\AtoMon$. This defines a bijection, whose inverse is given, for every $g:H\to \At(M)$, by putting $f:=\varepsilon_M U(g)$. It is routine to check that this bijection is natural in $H$ and $M$.
\end{proof}

\begin{remark}
The functor $\At\colon \Mon \to \AtoMon$ completes the diagram of functors of \cite[Proposition~3.4]{CCT}:
    $$
    \xymatrix@!=40pt{
    \AtoMon \ar[rr]_{\mathscr{A}}^\perp \ar[d]^U_[@][flip][flip]\perp & & \mathsf{Set} \ar@/_1.2pc/[ll]_{\mathscr F} \ar@/^1.3pc/[lld]^{\mathscr F'} \\
    \Mon \ar[rru]^{U'}_[@][flip]\perp \ar@/_-1pc/[u]^\At& & \qquad .
    }
    $$
Here $\mathscr{A}\colon \AtoMon \to \mathsf{Set}$ is the functor sending a monoid to its set of atoms, $U'$ is the forgetful functor from $\Mon$ to $\mathsf{Set}$, and $\mathscr{F}$ (resp., $\mathscr{F'}$) is the free functor from $\mathsf{Set}$ to $\AtoMon$ (resp., $\Mon$).
\end{remark}

\section{Pretorsion theories}\label{sec:pretorsion}

We recall from \cite{FFG} the definition of pretorsion theories.
Let $\Cal C$ be an arbitrary category, and fix a class $\Cal Z$ of objects of $\Cal C$, that we will call the {\em class of trivial objects}. A morphism $f\colon A\to A'$ in $\Cal C$ is said to be \textit{$\Cal Z$-trivial} if it factors through an object of $\Cal Z$. The class of $\Cal Z$-trivial morphisms forms an {\em ideal of morphisms} in $\Cal C$: for every pair of composable morphisms $f$ and $g$ in $\Cal C$, the composition $fg$ is $\Cal Z$-trivial whenever either $f$ or $g$ is $\Cal Z$-trivial. It is possible to introduce the notions of $\mathcal Z$-kernel and $\mathcal Z$-cokernel by replacing, in the usual definitions of kernel and cokernel, the ideal of zero morphisms with the ideal of trivial morphisms induced by the class $\mathcal Z$, as follows.

\begin{definition}
A morphism $\varepsilon\colon X\to A$ in $\Cal C$ is a \emph{$\Cal Z$-kernel} of a morphism $f\colon A \to A'$ if $f\varepsilon$ is $\Cal Z$-trivial and, whenever $\lambda \colon Y\to A$ is a morphism in $\Cal C$ such that $f\lambda$ is $\Cal Z$-trivial, there exists a unique morphism $\lambda'\colon Y\to X$ in $\Cal C$ satisfying $\lambda=\varepsilon\lambda'$.
The notion of \emph{$\Cal Z$-cokernel} is defined dually. A sequence $A\overset{f}{\to}B\overset{g}{\to}C$ is called a \emph{short $\Cal Z$-exact sequence} if $f$ is a $\Cal Z$-kernel of $g$ and $g$ is a $\Cal Z$-cokernel of $f$.
\end{definition}

\begin{definition}\label{def:pretorsion}
Let $\Cal T$ and $\Cal F$ be full and replete subcategories of $\Cal C$.
The pair $(\Cal T,\Cal F)$ is a \emph{pretorsion theory} in $\Cal C$, with class of trivial objects $\Cal Z:=\Cal T\cap \Cal F$, if the following two conditions are satisfied:
\begin{enumerate}[label=\textup{(\roman*)}]
\item every morphism $T\to F$ with $T \in \Cal T$ and $F \in \Cal F$ is $\Cal Z$-trivial;
\item for every object $X$ of $\Cal C$, there exists a short $\Cal Z$-exact sequence
$$\xymatrix{ T_X \ar[r]^f & X \ar[r]^g & F_X}$$
with $T_X\in\Cal T$ and $F_X\in\Cal F$.
\end{enumerate}
If $\Cal C$ is pointed and $\Cal Z$ consists only of the zero objects, then we recover the usual notion of {\em torsion theory} as defined in \cite{D, JT}.
\end{definition}

Recall that $\AtoMon$ is not a pointed category~\cite[Proposition~3.2]{CCT}, hence it cannot admit ``strict'' torsion theories, but only pretorsion theories. In this section we show how to lift torsion theories in $\Grp$ to pretorsion theories in $\AtoMon$. We then observe that this is an instance of a more general construction involving monocoreflective subcategories.

\subsection{Lifting torsion theories from groups to atomic monoids}\label{subsec:pretorsion_atomon}

Let $(\mathcal T,\mathcal F)$ be a torsion theory in $\Grp$. We denote by $t(G)$ the torsion radical of a group $G$, so that, for every $G\in\Grp$, there is a short exact sequence
$$
t(G)\to G\to G/t(G),
$$
where $t(G)\in\mathcal T$ and $G/t(G)\in\mathcal F$.

We regard $\Grp$ as a full subcategory of $\AtoMon$ and define two full
replete subcategories of $\AtoMon$ by
$$
\widetilde{\mathcal T}:=\mathcal T \qquad \text{and}\qquad \widetilde{\mathcal F}
:=
\left\{
H\in\AtoMon:H^\times\in\mathcal F
\right\}.
$$

\begin{proposition}\label{prop:lifting-torsion-theory}
The pair $(\widetilde{\mathcal T},\widetilde{\mathcal F})$ is a pretorsion theory in $\AtoMon$.
\end{proposition}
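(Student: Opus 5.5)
The plan is to first identify the class of trivial objects and the ideal of trivial morphisms, then verify conditions (i) and (ii) of Definition~\ref{def:pretorsion} using the adjunctions of Proposition~\ref{prop:adjunctions}.

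\emph{Trivial objects and trivial morphisms.} Since $\widetilde{\mathcal T}=\mathcal T$ consists of groups, and a group $G$ satisfies $G^\times=G$, we get $\mathcal Z=\widetilde{\mathcal T}\cap\widetilde{\mathcal F}=\mathcal T\cap\mathcal F$. In $\Grp$ this class contains only trivial groups, so $\mathcal Z=\{\{1\}\}$ up to isomorphism. The trivial monoid $\{1\}$ is initial in $\AtoMon$. A morphism $H\to\{1\}$ in $\AtoMon$ exists only if $H$ has no atoms; since $H$ is atomic, this means $H$ is a group. Hence a morphism $h\colon H\to K$ is $\mathcal Z$-trivial if and only if $H$ is a group and $h$ is constant equal to $1_K$. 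This characterization is used repeatedly below.

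\emph{Condition (i).} Let $T\in\mathcal T$ and let $F$ satisfy $F^\times\in\mathcal F$. A morphism $T\to F$ lands in $F^\times$, so it is the composite of a group homomorphism $T\to F^\times$ with the inclusion. The homomorphism $T\to F^\times$ is zero because $(\mathcal T,\mathcal F)$ is a torsion theory in $\Grp$. Since $T$ is a group, the morphism factors through $\{1\}$.

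\emph{Condition (ii).} For $X\in\AtoMon$, set $T_X:=t(X^\times)$ and let $f\colon T_X\to X$ be the inclusion. Let $F_X$ be the pushout in $\AtoMon$ of $X\xleftarrow{f}T_X\to\{1\}$; it exists by cocompleteness and is computed in $\Mon$. Let $g\colon X\to F_X$ be the canonical morphism. Because $\{1\}$ is initial, this pushout has exactly the universal property of a $\mathcal Z$-cokernel of $f$. Indeed, a morphism $g'\colon X\to Y$ satisfies that $g'f$ is $\mathcal Z$-trivial if and only if $g'f$ is constant $1$, because $T_X$ is a group; and then $g'$ factors uniquely through $g$.

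The key point is $F_X\in\widetilde{\mathcal F}$. Since $(-)^\times$ has the right adjoint $I$ by Proposition~\ref{prop:adjunctions}, it preserves colimits. Therefore $F_X^\times$ is the pushout in $\Grp$ of $X^\times\leftarrow t(X^\times)\to 1$, which is $X^\times/t(X^\times)\in\mathcal F$ because the torsion radical is normal. This is the step that would otherwise be the main obstacle, since computing units of a monoid quotient directly is messy; the adjunction circumvents it.

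Finally, $f$ is a $\mathcal Z$-kernel of $g$. First, $gf$ is trivial by construction. Next, let $\lambda\colon Y\to X$ be such that $g\lambda$ is $\mathcal Z$-trivial. Then $Y$ is a group and $g\lambda$ is constant $1$. Thus $\lambda$ factors through $X^\times$, and the composite $Y\to X^\times\to F_X^\times=X^\times/t(X^\times)$ is trivial. Hence $\lambda(Y)\subseteq t(X^\times)$, and $\lambda$ factors through $f$. The factorization is unique because $f$ is injective, and it is a morphism in $\AtoMon$ because groups have no atoms. This gives a short $\mathcal Z$-exact sequence $T_X\to X\to F_X$ and completes the proof.
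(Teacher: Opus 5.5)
Your proof is correct. It produces the same short $\Cal Z$-exact sequence as the paper, but by a more categorical route.

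The paper works concretely with the congruence $\theta_H$ generated by $\{(u,1_H)\mid u\in t(H^\times)\}$:
\begin{enumerate}[label=\textup{(\alph*)}]
\item it states as easy checks that $H/\theta_H$ is atomic, that $\rho_H$ preserves atoms, and that $(H/\theta_H)^\times\cong H^\times/t(H^\times)$ via $\rho_H^\times$;
\item it proves the $\Cal Z$-cokernel property by factoring $\mu$ through the quotient in $\Mon$ and then checking that $\overline{\mu}$ preserves atoms, using that every atom of $H/\theta_H$ is the class of an atom of $H$.
\end{enumerate}

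Since pushouts in $\AtoMon$ are computed in $\Mon$, your $F_X$ is exactly $X/\theta_X$ and your $g$ is $\rho_X$. However, you never need to look inside it:
\begin{enumerate}[label=\textup{(\alph*)}]
\item atomicity of $F_X$, and atom preservation of $g$ and of the induced factorizations, come for free from cocompleteness of $\AtoMon$;
\item the $\Cal Z$-cokernel property is just the universal property of the pushout, once you observe that $\{1\}$ is initial and that the $\Cal Z$-trivial maps out of a group are exactly the constant ones;
\item both $F_X^\times\cong X^\times/t(X^\times)$ and $\ker(g^\times)=t(X^\times)$ follow because the left adjoint $(-)^\times$ of Proposition~\ref{prop:adjunctions} carries the whole pushout square, cocone included, to a pushout square in $\Grp$, and $t(X^\times)$ is normal.
\end{enumerate}
Your kernel step tacitly uses $\ker(g^\times)=t(X^\times)$, not just the isomorphism type of $F_X^\times$, and this is justified by the preservation of the cocone.

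So your approach replaces the element-level verifications the paper leaves to the reader with already established structural facts: cocompleteness from \cite{CCT} and the adjunction $(-)^\times\dashv I$. The paper's version instead exhibits the torsion-free part explicitly as the quotient monoid $H/\theta_H$, which is convenient for concrete computations.
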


\begin{proof}
First observe that
$$
\Cal Z:=\widetilde{\mathcal T}\cap\widetilde{\mathcal F}
=
\mathcal T\cap\mathcal F,
$$
so $\Cal Z$ consists of (the isomorphism class of) the trivial monoid $\mathbf 1$, and the $\Cal Z$-trivial morphisms are precisely the morphisms which factor through $\mathbf 1$.

Let $T\in\widetilde{\mathcal T}$ and
$F\in\widetilde{\mathcal F}$. Since $T$ is a group, every morphism
$\varphi:T\to F$ in $\AtoMon$ induces a group homomorphism $\varphi^\times\colon T\to F^\times$, which is trivial because $T\in\mathcal T$ and $F^\times\in\mathcal F$. Hence $\varphi$ factors through $\mathbf 1$.

For $H\in\AtoMon$, let $\theta_H$ be the monoid congruence on $H$ generated by the pairs $\{(u,1_H)\mid u \in t(H^\times)\}$ and let
$$
\rho_H:H\to H/\theta_H
$$
be the canonical projection. It is easy to check that $H/\theta_H$ is atomic and $\rho_H$ is a morphism in $\AtoMon$.
Moreover,
$$
(H/\theta_H)^\times
\cong
H^\times/t(H^\times).
$$
Since $H^\times/t(H^\times)\in\mathcal F$, we have
$$
H/\theta_H\in\widetilde{\mathcal F}.
$$

Consider the following diagram in $\AtoMon$
$$
\xymatrix{
 & t(H^\times) \ar[r]^-{\varepsilon_H} \ar@{=}[d] & H \ar[r]^-{\rho_H} & H/\theta_H & \\
\textbf{1} \ar[r] & t(H^\times) \ar[r] & \; H^\times \ar[r] \ar@{^(->}[u] & \; H^\times /t(H^\times) \ar[r] \ar[u] & \textbf{1},
}
$$
where $\varepsilon_H$ is the inclusion and the second row is a short exact sequence when viewed in $\Grp$. We want to prove that the first row is a short $\Cal Z$-exact sequence in $\AtoMon$.

As $t(H^\times)\in \widetilde{\Cal T}$ and $H/\theta_H\in\widetilde{\mathcal F}$, the composite $\rho_H\varepsilon_H$ is $\Cal Z$-trivial. Let $\lambda:X\to H$ be a morphism in $\AtoMon$ such that $\rho_H\lambda$ is $\Cal Z$-trivial. Then $\rho_H\lambda$ factors through $\mathbf 1$, so there exists a morphism $X\to\mathbf 1$ in $\AtoMon$. This implies that $X$ has no atoms and hence that $X$ is a group. Therefore $\lambda(X)\subseteq H^\times$. Since $\rho_H\lambda$ is trivial and $\ker_{\Grp}(\rho_H^\times)=t(H^\times)$, we obtain $\lambda(X)\subseteq t(H^\times)$. Thus $\lambda$ factors uniquely through $\varepsilon_H$, and hence $\varepsilon_H$ is a $\Cal Z$-kernel of $\rho_H$.

Conversely, let $\mu:H\to Y$ be a morphism such that $\mu \varepsilon_H$ is $\Cal Z$-trivial. Then
$\mu(u)=1_Y$ for every $u\in t(H^\times)$. Hence $\theta_H\subseteq\ker_{\Mon}(\mu)$, and there is a unique monoid homomorphism $\overline{\mu}:H/\theta_H\to Y$ such that $\overline{\mu}\rho_H=\mu$. The map $\overline{\mu}$ preserves atoms. Indeed, every atom of $H/\theta_H$ is of the form $[a]$, with $a\in\mathcal A(H)$, and $\overline{\mu}([a])=\mu(a)\in\A(Y)$.
Thus $\overline{\mu}$ is a morphism in $\AtoMon$, and $\rho_H$ is a $\Cal Z$-cokernel of $\varepsilon_H$.

We have therefore constructed, for every $H\in\AtoMon$, a short $\Cal Z$-exact sequence
$$
t(H^\times)
\to
H
\to
H/\theta_H
$$
whose first term belongs to $\widetilde{\mathcal T}$ and whose last term belongs to $\widetilde{\mathcal F}$. Hence
$(\widetilde{\mathcal T},\widetilde{\mathcal F})$ is a pretorsion theory in $\AtoMon$.
\end{proof}

\begin{example}\label{ex:groups-reduced-pretorsion}
Consider the torsion theory $(\Grp,\{\mathbf 1\})$ in $\Grp$. Its torsion radical is given by $t(G)=G$ for every group $G$. The construction of Proposition~\ref{prop:lifting-torsion-theory} gives
$$
\widetilde{\mathcal T}=\Grp \qquad \text{and} \qquad
\widetilde{\mathcal F}
=
\left\{
H\in\AtoMon:H^\times=\{1_H\}
\right\}.
$$
Thus $\widetilde{\mathcal F}$ is precisely the full subcategory $\RedAtoMon$ of reduced atomic monoids. It follows that
$$
(\Grp,\RedAtoMon)
$$
is a pretorsion theory in $\AtoMon$.
\end{example}

\subsection{Lifting pretorsion theories along a monocoreflection}\label{subsec:pretorsion_general}
The construction of Proposition~\ref{prop:lifting-torsion-theory} is a particular instance of a more general criterion that allows one to lift pretorsion theories along a monocoreflection. In this subsection we describe the general setting, applying the result to a situation different from that of $\AtoMon$.

\medskip

Let $\Cal A\subseteq\Cal B$ be categories and assume that $\Cal A$ is a monocoreflective subcategory of $\Cal B$, that is, the inclusion functor
$
\iota\colon\Cal A\to\Cal B
$
has a right adjoint
$
R\colon\Cal B\to\Cal A
$
and the counit components
$
c_B\colon\iota R(B)\to B
$
are monomorphisms for every $B\in\Cal B$.

Let $(\Cal T,\Cal F)$ be a pretorsion theory in $\Cal A$ with class of trivial objects $\Cal Z$. Let
$$
t\colon\Cal A\to\Cal T \qquad \text{and} \qquad f\colon \Cal A \to \Cal F
$$
be the associated torsion and torsionfree functors. We therefore have, for every object $A \in \Cal A$, a short $\Cal Z$-exact sequence in $\Cal A$:
$$
\xymatrix{
t(A) \ar[r]^-{k_A} & A \ar[r]^-{r_A} & f(A).
}
$$
For ease of readability, we identify $\Cal A$, $\Cal T$ and $\Cal F$ with their essential images in $\Cal B$. In particular, we implicitly assume that the inclusion functors $\iota_\Cal T\colon \Cal T \to \Cal A$, $\iota_\Cal F\colon F \to \Cal A$ and $\iota\colon \Cal A \to \Cal B$ are applied when needed, but we omit to write them explicitly. Define the two full replete subcategories of $\Cal B$
$$
\widetilde{\Cal T}:=\Cal T\quad \text{ and }\quad
\widetilde{\Cal F}
:=
\left\{
B\in\Cal B:R(B)\in\Cal F
\right\}.
$$
Then
$$
\widetilde{\Cal T}\cap\widetilde{\Cal F}=\Cal T\cap\Cal F=\Cal Z.
$$
For every $B\in\Cal B$, set
$$
\varepsilon_B
:=
c_B k_{R(B)}
\colon
tR(B)\to B.
$$
Being a composition of monomorphisms, $\varepsilon_B$ is a monomorphism, and it is precisely the $B$-component of the counit of the adjunction $\iota \circ \iota_\Cal T \dashv t\circ R$. Therefore, the subcategory $\widetilde{\Cal T}=\Cal T$ is monocoreflective in $\Cal B$.

\begin{lemma}\label{lem:lifted-torsion-coreflection}
Every morphism from an object of $\widetilde{\Cal T}$ to an object of $\widetilde{\Cal F}$ is $\Cal Z$-trivial.
\end{lemma}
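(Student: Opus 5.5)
Let $T\in\widetilde{\Cal T}=\Cal T$ and $F\in\widetilde{\Cal F}$, so that $R(F)\in\Cal F$, and let $\varphi\colon T\to F$ be a morphism in $\Cal B$. The plan is to push $\varphi$ through the coreflection, use axiom (i) of the pretorsion theory $(\Cal T,\Cal F)$ in $\Cal A$, and then pull the resulting factorization back to $\Cal B$. This mirrors the proof of Proposition~\ref{prop:lifting-torsion-theory}, where the coreflection was $(-)^\times$.

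Since $T\in\Cal T\subseteq\Cal A$, the adjunction $\iota\dashv R$ gives a unique morphism $\varphi'\colon T\to R(F)$ in $\Cal A$ with $\varphi=c_F\,\iota(\varphi')$; in other words, $\varphi$ is the transpose of $\varphi'$ composed with the counit. Now $\varphi'$ goes from an object of $\Cal T$ to an object of $\Cal F$ inside $\Cal A$. By condition (i) of Definition~\ref{def:pretorsion} applied to $(\Cal T,\Cal F)$, it therefore factors as $\varphi'=\beta\alpha$ with $\alpha\colon T\to Z$, $\beta\colon Z\to R(F)$ and $Z\in\Cal Z$.

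Applying $\iota$ and composing with $c_F$ gives
$$\varphi=c_F\,\iota(\beta)\,\iota(\alpha),$$
which is a factorization of $\varphi$ through $\iota(Z)$. Under the identification of $\Cal Z$ with its essential image in $\Cal B$, the object $\iota(Z)$ belongs to $\widetilde{\Cal T}\cap\widetilde{\Cal F}=\Cal Z$, so $\varphi$ is $\Cal Z$-trivial.

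No step is a real obstacle. The only point needing care is bookkeeping. The triviality notion in $\Cal B$ refers to the class $\widetilde{\Cal T}\cap\widetilde{\Cal F}$, and this equals $\Cal Z$ by the displayed computation preceding the lemma. That computation uses $R\iota(Z)\cong Z$, which holds because $\iota$ is fully faithful, so the counit is an isomorphism on objects of $\Cal A$. Monicity of $c_F$ is not needed for this lemma.
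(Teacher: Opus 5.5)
Your proof is correct and is the paper's argument: transpose $\varphi$ along $\iota\dashv R$ to a morphism $T\to R(F)$ in $\Cal A$, factor it through $\Cal Z$ using axiom (i) of $(\Cal T,\Cal F)$, and compose with the counit $c_F$. Your check that the factoring object lies in $\widetilde{\Cal T}\cap\widetilde{\Cal F}=\Cal Z$ just spells out the observation the paper records before the lemma.
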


\begin{proof}
Let $T\in\widetilde{\Cal T}$, $F\in\widetilde{\Cal F}$, and let $\varphi\colon T\to F$ be a morphism in $\Cal B$. Under the adjunction $\iota\dashv R$, the morphism $\varphi$ corresponds to a morphism $\overline \varphi\colon T\to R(F)$ in $\Cal A$. Since $T\in\Cal T$ and $R(F)\in\Cal F$, the morphism $\overline \varphi$ factors through an object in $\Cal Z$, and consequently also $\varphi$ does. Therefore, $\varphi$ is $\Cal Z$-trivial.
\end{proof}

\begin{theorem}\label{thm:lifting-torsion-theory}
With the notation above, assume that the following conditions hold.
\begin{enumerate}[label=\textup{(\arabic*)}]
    \item\label{thm:lifting-torsion-theory 1}
    For every $B \in \Cal B$, if there exists a morphism $B \to Z$ with $Z \in \Cal Z$, then $B \in \Cal A$;
    \item\label{thm:lifting-torsion-theory 2}
    for every $B\in\Cal B$, there exist an object $F_B\in\Cal B$, a morphism $\rho_B\colon B\to F_B$, and an isomorphism $\omega_B\colon R(F_B)\to fR(B)$ such that $\rho_B$ is a $\Cal Z$-cokernel of $\varepsilon_B$ in $\Cal B$ and $\omega_B R(\rho_B)=r_{R(B)}$.
\end{enumerate}
Then, the pair $(\widetilde{\Cal T},\widetilde{\Cal F})$ is a pretorsion theory in $\Cal B$ with class of trivial objects $\Cal Z$.
\end{theorem}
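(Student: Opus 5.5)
The plan is to mirror the proof of Proposition~\ref{prop:lifting-torsion-theory} in the abstract setting. Condition (i) of Definition~\ref{def:pretorsion} is exactly Lemma~\ref{lem:lifted-torsion-coreflection}. For condition (ii), fix $B\in\Cal B$. I claim that
$$
tR(B)\xrightarrow{\ \varepsilon_B\ } B\xrightarrow{\ \rho_B\ } F_B
$$
is a short $\Cal Z$-exact sequence with $tR(B)\in\widetilde{\Cal T}$ and $F_B\in\widetilde{\Cal F}$. Membership is immediate: $tR(B)\in\Cal T$, and $R(F_B)\cong fR(B)\in\Cal F$ via $\omega_B$, with $\Cal F$ replete. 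By hypothesis~\ref{thm:lifting-torsion-theory 2}, $\rho_B$ is already a $\Cal Z$-cokernel of $\varepsilon_B$. Also $\rho_B\varepsilon_B$ is $\Cal Z$-trivial, since it is a morphism from $\widetilde{\Cal T}$ to $\widetilde{\Cal F}$, so Lemma~\ref{lem:lifted-torsion-coreflection} applies again.

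It remains to show that $\varepsilon_B$ is a $\Cal Z$-kernel of $\rho_B$. Let $\lambda\colon Y\to B$ with $\rho_B\lambda$ $\Cal Z$-trivial, say $\rho_B\lambda=\beta\alpha$ with $\alpha\colon Y\to Z$ and $Z\in\Cal Z$.

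First, hypothesis~\ref{thm:lifting-torsion-theory 1} gives $Y\in\Cal A$. So $\lambda$ factors uniquely as $\lambda=c_B\bar\lambda$ with $\bar\lambda\colon Y\to R(B)$ in $\Cal A$. Here $\bar\lambda$ is the adjoint transpose; uniqueness also follows from $c_B$ being mono.

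Next, I show that $r_{R(B)}\bar\lambda$ is $\Cal Z$-trivial in $\Cal A$. By naturality of the counit, $c_{F_B}R(\rho_B)=\rho_B c_B$. Hence
$$
c_{F_B}R(\rho_B)\bar\lambda=\rho_B\lambda=\beta\alpha .
$$
Since $Z\in\Cal A$, $\beta$ factors as $c_{F_B}\bar\beta$. Since $c_{F_B}$ is mono, this gives $R(\rho_B)\bar\lambda=\bar\beta\alpha$, a morphism factoring through $Z$ inside $\Cal A$. Composing with the isomorphism $\omega_B$ shows that $r_{R(B)}\bar\lambda=\omega_B R(\rho_B)\bar\lambda$ is $\Cal Z$-trivial.

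Because $k_{R(B)}$ is a $\Cal Z$-kernel of $r_{R(B)}$ in $\Cal A$, there is a unique $\lambda'\colon Y\to tR(B)$ with $\bar\lambda=k_{R(B)}\lambda'$. Then $\lambda=c_Bk_{R(B)}\lambda'=\varepsilon_B\lambda'$. Uniqueness of $\lambda'$ in $\Cal B$ follows since $\varepsilon_B$ is a monomorphism.

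The main obstacle is the transfer of triviality from $\Cal B$ to $\Cal A$. This is where both hypothesis~\ref{thm:lifting-torsion-theory 1} (to place $Y$ in $\Cal A$) and the compatibility $\omega_BR(\rho_B)=r_{R(B)}$ are needed. One must also check that $Z\in\Cal A$, so that $\beta$ factors through $c_{F_B}$. This holds because $\Cal Z\subseteq\Cal T\subseteq\Cal A$ and $\Cal A$ is replete.
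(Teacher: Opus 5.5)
Your proof is correct and follows essentially the same route as the paper's: hypothesis (1) places $Y$ in $\Cal A$, the $\Cal Z$-triviality of $\rho_B\lambda$ is transported to $r_{R(B)}$ via $R(\rho_B)$ and $\omega_B$, the $\Cal Z$-kernel property of $k_{R(B)}$ yields the factorization, and uniqueness follows because $\varepsilon_B$ is a monomorphism. Your adjoint transpose $\bar\lambda$ is just the paper's $R(\lambda)c_Y^{-1}$. You also spell out two points the paper leaves implicit: why $R(\rho_B)\bar\lambda$ is $\Cal Z$-trivial inside $\Cal A$, and why $F_B\in\widetilde{\Cal F}$.
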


\begin{proof}
We get from Lemma~\ref{lem:lifted-torsion-coreflection} that every morphism from an object of $\widetilde{\Cal T}$ to an object of $\widetilde{\Cal F}$ is $\Cal Z$-trivial.
Let $B \in \Cal B$ and $\rho_B\colon B \to F_B$ be a morphism as in \ref{thm:lifting-torsion-theory 2}. We want to prove that the sequence
$$
\xymatrix{
tR(B) \ar[r]^-{\varepsilon_B} & B \ar[r]^-{\rho_B} & F_B
}
$$
is a short $\Cal Z$-exact sequence in $\Cal B$. By hypothesis, $\rho_B$ is the $\Cal Z$-cokernel of $\varepsilon_B$.
Let $\lambda\colon X \to B$ be a morphism in $\Cal B$ such that $\rho_B\lambda$ is $\Cal Z$-trivial. Then, by \ref{thm:lifting-torsion-theory 1}, $X \in \Cal A$. The morphism $R(\rho_B)R(\lambda)$ is $\Cal Z$-trivial. Moreover, $R(\rho_B)$ is isomorphic to $r_{R(B)}$, so, in particular, $k_{R(B)}$ is the $\Cal Z$-kernel of $R(\rho_B)$. It follows that there exists a (unique) morphism $\sigma \colon R(X)\to tR(B)$ such that $R(\lambda)=k_{R(B)}\sigma$. Since $c_X\colon R(X)\to X$ is an isomorphism, we get a factorization of $\lambda$ through $\varepsilon_B$. This factorization is unique because $\varepsilon_B$ is a monomorphism. This shows that $\varepsilon_B$ is the $\Cal Z$-kernel of $\rho_B$ in $\Cal B$, concluding the proof.
\end{proof}

\begin{remark} Proposition~\ref{prop:lifting-torsion-theory} can be recovered as an application of Theorem~\ref{thm:lifting-torsion-theory}. Indeed, a morphism $H\to\mathbf 1$ in $\AtoMon$ exists only if $H$ is a group, so condition~\ref{thm:lifting-torsion-theory 1} holds. Moreover, the proof of Proposition~\ref{prop:lifting-torsion-theory} shows that $\rho_H$ is the $\Cal Z$-cokernel of $\varepsilon_H$, while the canonical isomorphism
$(H/\theta_H)^\times\cong H^\times/t(H^\times)$ identifies $R(\rho_H)=\rho_H^\times$ with the torsionfree reflection of $H^\times$. Thus condition~\ref{thm:lifting-torsion-theory 2} is also satisfied.
\end{remark}

For the sake of completeness, we apply the criterion of Theorem~\ref{thm:lifting-torsion-theory} to a a context different from that of monoids. In the following example, we lift a torsion theory in the category of pointed sets to a pretorsion theory in the category of pointed quivers.

\begin{example}\label{ex:pointed-quivers}
Let $\Quiv_*$ be the category of pointed quivers. An object of $\Quiv_*$ consists of a quiver
$$
Q\colon E(Q)\overset{s}{\underset{t}{\rightrightarrows}}V(Q)
$$
together with a distinguished vertex $v_Q\in V(Q)$, and morphisms are
quiver morphisms preserving the distinguished vertex. 
Let $\Set_*$ be the category of pointed sets, regarded as the full
subcategory of $\Quiv_*$ consisting of discrete pointed quivers. There is an adjunction $D\dashv V$, where
$$
D\colon\Set_*\to\Quiv_*
$$
is the inclusion functor (every pointed set is regarded as a discrete quiver) and 
$$
V\colon\Quiv_*\to\Set_*
$$
is the functor sending a pointed quiver to its pointed set of vertices. Indeed, a morphism from a discrete pointed quiver $D(X)$ to a pointed quiver $Q$ is uniquely determined by a pointed map $X\to V(Q)$.
The counit
$$
c_Q\colon D(V(Q))\to Q
$$
is the identity on vertices and the unique map $\emptyset\to E(Q)$ on edges. In particular, it is a monomorphism. Thus $\Set_*$ is a monocoreflective subcategory of $\Quiv_*$.

Consider in $\Set_*$ the torsion theory
$$
(\Cal T,\Cal F)=(\Set_*,\Cal Z),
$$
where $\Cal Z$ denotes the full subcategory of one-pointed sets. Its associated functors
are
$$
t=\id_{\Set_*}
\qquad\text{and}\qquad
f=f_0,
$$
where $f_0$ sends a pointed set $(X,x_0)$ to $(\{x_0\}, x_0)$.
The induced subcategories of $\Quiv_*$ are
$$
\widetilde{\Cal T}
=
\{\text{discrete pointed quivers}\}
 \qquad \text{and} \qquad 
\widetilde{\Cal F}
=
\{\text{pointed quivers having exactly one vertex}\}.
$$
Their intersection is the full replete subcategory of pointed quivers with one vertex and no edges.

We verify the hypotheses of Theorem~\ref{thm:lifting-torsion-theory}.
First, a quiver morphism $Q\to Z$, with $Z \in \Cal Z$, exists if and only if $E(Q)=\emptyset$. Thus, if such a morphism exists, then $Q$ is discrete and hence belongs to $\Set_*$. Thus condition \ref{thm:lifting-torsion-theory 1} holds.

Now let $Q\in\Quiv_*$ and let $v_Q$ be its distinguished vertex. Define a pointed quiver $F_Q$ by
$$
V(F_Q)=\{v_Q\},
\qquad
E(F_Q)=E(Q),\qquad \text{and}\qquad s(e)=t(e)=v_Q \; \text{ for every } e \in E(Q).
$$
Thus every edge of $F_Q$ is a loop at its unique
vertex. Define
$$
\rho_Q\colon Q\to F_Q
$$
to be the constant map on vertices and the identity map on edges. Since $F_Q$ has exactly one vertex, we have $F_Q\in\widetilde{\Cal F}$. We claim that $\rho_Q$ is the $\Cal Z$-cokernel of $\varepsilon_Q=c_Q\colon D(V(Q))\to Q$. The composite $\rho_Q\varepsilon_Q$ factors through $\Cal Z$, since it sends
every vertex of $Q$ to the unique vertex $v_Q$ of $F_Q$.

Let $h\colon Q\to Y$ be a morphism such that $h\varepsilon_Q$ is $\Cal Z$-trivial. This is
equivalent to saying that the vertex map $h_V\colon V(Q)\to V(Y)$ sends every vertex of $Q$ to the distinguished vertex $v_Y$ of $Y$.
Define
$$
\overline h\colon F_Q\to Y
$$
by sending the unique vertex of $F_Q$ to $v_Y$ and by setting $\overline h_E(e):=h_E(e)$ for every $e\in E(Q)$. Since $h_V$ is constant at $v_Y$, for every edge $e\in E(Q)$ one has $s(h_E(e))=t(h_E(e))=v_Y$.
Therefore, $\overline h$ is a well-defined morphism of pointed quivers, and $\overline h\rho_Q=h$.
The factorization is unique because $\rho_Q$ is surjective on vertices and is the identity on edges. Hence $\rho_Q$ is the $\Cal Z$-cokernel of $\varepsilon_Q$.

Finally,
$$
V(\rho_Q)\colon V(Q)\to V(F_Q)
$$
is the unique pointed map to $\{v_Q\}$. It is therefore precisely the torsionfree reflection
$$
r_{V(Q)}\colon V(Q)\to f(V(Q)).
$$
All the hypotheses of Theorem~\ref{thm:lifting-torsion-theory} are
satisfied. Consequently, $(\widetilde{\Cal T},\widetilde{\Cal F})$ is a pretorsion theory in $\Quiv_*$. For every pointed quiver $Q$, its associated short $\Cal Z$-exact sequence is
$$
\xymatrix{
D(V(Q)) \ar[r]^-{c_Q} & Q \ar[r]^{\rho_Q} & F_Q.
}
$$
\end{example}

\end{document}